\newcommand{\eg}{e.\,g.,\ }
\newcommand{\T}{\mathcal{T}}
\newcommand{\Hn}{ {\mathbb{H}_n} }   
\newcommand{\XWpd}{ {\mathbb{X}^{\raisebox{0.2em}{{\fontsize{3}{2}\selectfont $>$}}}} }
\newcommand{\XWpdpd}{ {\mathbb{X}^{\raisebox{0.2em}{{\fontsize{3}{2}\selectfont $\gg$}}}} }  
\newtheorem{theorem}{Theorem}[section]
\newtheorem{remark}[theorem]{Remark}
\newtheorem{lemma}[theorem]{Lemma}
\newtheorem{definition}[theorem]{Definition}
\newtheorem{corollary}[theorem]{Corollary}
\newcommand{\M}{{\mathcal M}}
\renewcommand{\S}{{\mathcal S}}
\DeclareMathOperator{\diag}{diag}
\DeclareMathOperator{\rank}{rank}
\DeclareMathOperator{\trace}{trace}
\newcommand {\matr}      [1] {\left[\begin{array}{#1}}
\newcommand {\rix}          {\end{array}\right]}
\newcommand{\ie}{i.\,e.\ }
\newcommand{\mat}[3][C]{
	\mathbb{#1}^{
		\IfSubStr{#2}{+}{(#2)}{#2}
		\times
		\IfSubStr{#3}{+}{(#3)}{#3}
	}
}
\DeclareDocumentCommand{\matz}{m m O{K} O{z}}{\mathbb{#3}[{#4}]^
	{
		\IfSubStr{#1}{+}{(#1)}{#1}
		\times
		\IfSubStr{#2}{+}{(#2)}{#2}
	}}
\title{Optimal robustness of passive discrete time systems}  
\author{V. Mehrmann\footnotemark[1], P. Van Dooren\footnotemark[2]
}
\date{\today}
\begin{document}
\maketitle

\begin{abstract} We construct optimally robust realizations of a given rational transfer function that represents a passive discrete-time system.
We link it to the solution set of linear matrix inequalities defining passive transfer functions.
We also consider the problem of finding the nearest passive system to a given non-passive one.
\end{abstract}
{\bf Keywords:} Linear matrix inequality, passivity, robustness, discrete-time system, port-Ha\-mil\-to\-ni\-an system \\
{\bf AMS Subject Classification}: 93D09, 93C05, 49M15, 37J25

\footnotetext[1]{
Institut f\"ur Mathematik MA 4-5, TU Berlin, Str.\ des 17.\ Juni 136,
D-10623 Berlin, Germany.
\url{mehrmann@math.tu-berlin.de}.
}
\footnotetext[2]{
Department of Mathematical Engineering, Universit\'e catholique de Louvain, Louvain-La-Neuve, Belgium.
\url{paul.vandooren@uclouvain.be}.
}

\section{Introduction}
We consider realizations of linear discrete-time dynamical systems for which the associated transfer function is {\em passive}. Such transfer functions
play a fundamental role in systems and control theory: they represent \eg spectral density functions of stochastic processes, show up in spectral factorizations and
are also related to discrete-time algebraic Riccati equations. Passive transfer functions can be described using convex sets,
and this property has lead to the extensive use of convex optimization techniques in this area \cite{BoyEFB94}.

In this paper we show that in the set of possible realizations of a given passive transfer function, there is a subset that maximizes robustness, in the sense
that their so-called {\em passivity radius} is nearly optimal. Related results for continuous-time systems were already obtained in a companion paper \cite{MeV19}.
Here we consider the discrete-time system
\begin{equation} \label{statespace_d}
 \begin{array}{rcl} x_{k+1} & = & Ax_k + B u_k,\ x_0=0,\\
y_k&=& Cx_k+Du_k,
\end{array}
\end{equation}
where $u_k \in \mathbb{C}^m$, $x_k \in \mathbb{C}^n$, and  $y_k \in \mathbb{C}^m$  are vector-valued sequences denoting, respectively, the \emph{input}, \emph{state},
and \emph{output} of the system. Denoting real and complex $n$-vectors ($n\times m$ matrices) by $\mathbb R^n$, $\mathbb C^{n}$ ($\mathbb R^{n \times m}$, $\mathbb{C}^{n \times m}$), respectively, the coefficient matrices satisfy $A\in \mathbb{C}^{n \times n}$,   $B\in \mathbb{C}^{n \times m}$, $C\in \mathbb{C}^{m \times n}$, and  $D\in \mathbb{C}^{m \times m}$.

We restrict ourselves to systems which are \emph{minimal}, \ie the pair $(A,B)$ is \emph{controllable}  (for all $z\in \mathbb C$, $\rank \mbox{\small $[\,z I-A \ B\,]$} =n$), and the pair $(A,C)$ is \emph{observable} (\ie $(A^\mathsf{H},C^\mathsf{H})$ is controllable). Here, the Hermitian (or conjugate) transpose (transpose) of a vector or matrix $V$ is denoted by
$V^{\mathsf{H}}$ ($V^{\mathsf{T}}$) and the identity matrix is denoted by $I_n$ or $I$ if the dimension is clear. We furthermore require that input and output dimensions are equal to $m$.

\emph{Passive} systems are well studied in the continuous-time case, starting with the works \cite{Wil72a,Wil72b}. Here we consider
the equivalent definition in the discrete-time case and derive so-called {\em normalized passive realizations} that could be considered as ``discrete-time port-Hamiltonian systems''.
Similar attempts were already made in the literature \cite{KoL18},\cite{SSV12},\cite{TCV18}.

The paper is organized as follows. After going over some preliminaries in Section~\ref{sec:prelim}, we characterize in Section~\ref{sec:norm} what we called normalized passive realizations
of a discrete-time passive system. We then show in Section~\ref{sec:passrad} their relevance in estimating the passivity radius of sicrete-time passive systems and construct in
Section~\ref{sec:maxpass} realizations with nearly optimal robustness margin for passivity.
In Section \ref{sec:computing} we describe an algorithm to compute this robustness margin.
In Section \ref{sec:distance} we show how to use these ideas to estimate the distance to the set of discrete-time passive systems.

\section{Passive systems} \label{sec:prelim}
Throughout this article we will use the following notation.
We denote the set of Hermitian matrices in $\mathbb{C}^{n \times n}$ by $\Hn$.
Positive definiteness (semi-definiteness) of  $A\in \Hn$ is denoted by $A>0$ ($A\geq 0$).
The real and imaginary parts of a complex matrix $Z$ are written as $\Re (Z)$ and $\Im (Z)$, respectively, and $\imath$ is the imaginary unit.
We consider functions over $\Hn$, which is a vector space if considered as a \emph{real} subspace of $\mat[R]nn+\imath \mat[R]nn$.

The concept of \emph{passivity} is well studied. We briefly recall some important properties following \cite{Wil72b},
and refer to the literature for proofs and for a more detailed survey. Consider a discrete-time system (\ref{statespace_d}) with minimal state-space model
$$\M:=\{A,B,C,D\}$$
and transfer function $\mathcal T(z):=C(zI_n-A)^{-1}B+D$ and define the complex analytic function of $z\in \mathbb{C}$~:
\[ \Phi(z):=\mathcal T^{\mathsf{H}}(z^{-1}) + \mathcal T(z), \] which coincides with the Hermitian part of $\mathcal T(z)$ on the  unit circle:
\[ \Phi(e^{\imath \omega})=[\mathcal T(e^{\imath \omega})]^\mathsf{H} + \mathcal T(e^{\imath \omega}). \]

The transfer function $\mathcal T(z)$ is called {\em strictly positive-real} \/if $\Phi(e^{\imath \omega})> 0 $ for all $\ \omega\in [-\pi,\pi]$ and
it is called \emph{positive-real} if $\Phi(e^{\imath \omega})\geq 0 $ for all $\ \omega\in [-\pi,\pi]$;
$\mathcal T(z)$ is called {\em asymptotically stable} if the eigenvalues of $A$ are in the open unit disc, and it is called {\em stable} if the
eigenvalues of $A$ are in the closed unit disc, with any eigenvalues occurring on the unit circle being semi-simple.
With these two properties, then $\mathcal T(z)$ is called {\em strictly passive} \/if  it is strictly positive-real and asymptotically stable and it is
called \emph{passive} if it is positive real and stable.

The transfer function $\mathcal T(z)$ is the Schur complement of the so-called {\em system pencil}
\begin{equation} \label{statespace}
S(z) :=
\left[ \begin{array}{cc|c} 0 & A-zI_n & B \\
zA^{\mathsf{H}}-I_n & 0 & C^{\mathsf{H}} \\ \hline zB^{\mathsf{H}} & C & D^{\mathsf{H}}+D  \end{array} \right]
\end{equation}
and if the model $\M$ is minimal, then the finite generalized eigenvalues of $S(z)$ are the finite zeros of $\Phi(z)$.
The following equivalence transformation, using an arbitrary matrix $X \in \Hn$, leaves the Schur complement, and hence also the transfer function $\Phi(z)$, unchanged
{\small \begin{equation} \label{Sz}
\left[ \begin{array}{cc|c} 0 & A-zI_n & B \\
zA^{\mathsf{H}}-I_n & X - A^{\mathsf{H}}XA & C^{\mathsf{H}} - A^{\mathsf{H}}XB \\ \hline zB^{\mathsf{H}} &
C- B^{\mathsf{H}}XA & D^{\mathsf{H}}+D -B^{\mathsf{H}}XB \end{array} \right] =
\left[ \begin{array}{cc|c} I_n & 0 & 0 \\
-A^\mathsf{H}X & I_n & 0 \\ \hline -B^\mathsf{H}X & 0 & I_m  \end{array} \right] S(z)
\left[ \begin{array}{cc|c} I_n & -X & 0 \\
0 & I_n & 0 \\ \hline 0 & 0 & I_m  \end{array} \right].
\end{equation} }
Let us define the submatrix of \eqref{Sz}, given by
\begin{equation} \label{prls}
W(X,\M) := \left[
\begin{array}{cc}
X - A^{\mathsf{H}}XA & C^{\mathsf{H}} - A^{\mathsf{H}}XB \\
C- B^{\mathsf{H}}XA & D^{\mathsf{H}}+D -B^{\mathsf{H}}XB
\end{array}
\right],
\end{equation}
which we will also denote as $W(X)$ when the underlying model $\M$ is obvious from the context. Then it follows by simple algebraic manipulation that
\[
\Phi(z) =
\left[ \begin{array}{cc} B^{\mathsf{H}}(z^{-1}\,I_n - A^{\mathsf{H}})^{-1} & I_m  \end{array} \right]
\, W(X,\M) \left[ \begin{array}{c} (z\,I_n -A)^{-1}B \\ I_m \end{array} \right],
\]
and that $\mathcal T(z)$ is positive real if and only if there exists $X\in \Hn$ such that the Linear Matrix Inequality (LMI)
\begin{equation} \label{KYP-LMI}
W(X,\M) \geq 0
\end{equation}
holds. Moreover, $\mathcal T(z)$ is stable if and only if the matrix $X$ in this LMI is also positive definite.
We will therefore make frequent use of the following sets
\begin{subequations}\label{LMIsolnsets}
\begin{align}
&\XWpd :=\left\{ X\in \Hn \left|   W(X,\M) \geq 0,\ X >0 \right.\right\},\label{XpdsolnWpsd} \\[1mm]
&\XWpdpd :=\left\{ X\in \Hn \left|   W(X,\M) > 0,\ X >0 \right.\right\}. \label{XpdsolWpd}
\end{align}
\end{subequations}
An important subset of ${\XWpd}$ are those solutions to (\ref{KYP-LMI}) for which the
rank $r$ of $W(X)$ is minimal ({\ie} for which $r=\rank\Phi(z)$).
If $D^{\mathsf{H}}+D-B^{\mathsf{H}}XB$ is invertible, then
the minimum rank solutions in $\XWpd$
are those for which $\rank W(X) = \rank (D^{\mathsf{H}}+D-B^{\mathsf{H}}XB)  = m$, which in turn is the case
if and only if the Schur complement of $D^{\mathsf{H}}+D-B^{\mathsf{H}}XB$ in $W(X)$ is zero.  This Schur
complement is associated with the discrete-time \emph{algebraic Riccati equation (ARE)}
\begin{equation}
\mathsf{Ricc}(X) := X -A^{\mathsf{H}}XA - (C^{\mathsf{H}}-A^{\mathsf{H}}XB)(D^{\mathsf{H}}+D-B^{\mathsf{H}}XB)^{-1}(C-B^{\mathsf{H}}XA)=0.\label{riccatid}
\end{equation}
Solutions $X$ to (\ref{riccatid}) produce a spectral factorization of $\Phi(z)$, and each solution corresponds to a
\emph{invariant subspace} spanned by the columns of $U:=\matr{cc} I_n & -X^{\mathsf{T}} \rix^{\mathsf{T}} $
that remains invariant under the multiplication with the matrix
\begin{equation}\label{SymMatrix}
S :=\matr{cc} I_n &  B (D^{\mathsf{H}}+D)^{-1} B^{\mathsf{H}} \\
 0 & (A-B (D^{\mathsf{H}}+D)^{-1} C)^{\mathsf{H}} \rix^{-1} \matr{cc} A-B (D^{\mathsf{H}}+D)^{-1} C & 0 \\
C^{\mathsf{H}} (D^{\mathsf{H}}+D)^{-1} C & I_n \rix,
\end{equation}
\ie $U$ satisfies $SU=U A_F$ where the so-called \emph{closed loop matrix} is defined as $A_{F}=A-BF$ with
$F := (D^{\mathsf{H}}+D-B^{\mathsf{H}}XB)^{-1}(C-B^{\mathsf{H}}XA)$. Such a subspace is called a \emph{Lagrangian invariant subspace} and the
matrix $S$ has a \emph{symplectic structure} (see e.g., \cite{Meh91},\cite{FreMX02}).
Each solution $X$ of~\eqref{riccatid} can also be associated with an \emph{extended Lagrangian invariant subspace}
for the pencil $S(z)$, spanned by the columns of
$ \widehat{U}:=\matr{ccc} -X^{\mathsf{T}}
& I_n & -F^{\mathsf{T}}  \rix^{\mathsf{T}}$.
 In particular, $\widehat{U}$ satisfies
\[
\left[ \begin{array}{ccc} 0 & A & B \\
	-I_n & 0 & C^{\mathsf{H}} \\  0 & C & D^{\mathsf{H}}+D  \end{array} \right] \widehat{U}
  =\left[ \begin{array}{ccc} 0 & -I_n & 0\\
	A^{\mathsf{H}} & 0 & 0\\ B^{\mathsf{H}} & 0 & 0 \end{array} \right] \widehat{U} A_{F}.
\]
If $D^{\mathsf{H}}+D-B^{\mathsf{H}}XB$ is singular
then more complicated constructions are necessary, see \cite{Meh91}.

In the continuous-time case, the definition of a passive systems has its origin in network theory, but its formal definition is
associated with the existence of a storage function and a particular dissipation inequality.
The equivalent concept for the discrete-time case again follows from the LMI \eqref{KYP-LMI}.
If we define the vector $z_k$ as the stacked vector of the state $x_k$ above the input $u_k$, and construct the inner product
$z_k^{\mathsf{H}} W(X) z_k$, then we obtain the inequality
\begin{equation} \label{dissipation}
 x_k^{\mathsf{H}}X x_k - x_{k+1}^{\mathsf{H}} Xx_{k+1} + y_k^{\mathsf{H}}u_k + u_k^{\mathsf{H}}y_k =  z_k^{\mathsf{H}}W(X) z_k \ge 0.
\end{equation}
Using the quadratic storage function $\mathcal H(x_i):=\frac12 x_i^{\mathsf{H}}X x_i$ this yields a \emph{dissipation inequality}
\[
 \mathcal H(x_{k})- \mathcal H(x_{0}) \le \sum_{i=0}^{k-1} \Re(y_i^{\mathsf{H}}u_i)
\]
that is similar to the one of the continuous-time formulation.
It follows from the continuous-time literature \cite{Wil72b} and the bilinear transformation between continuous-time and discrete-time systems \cite{BMNV}
that if the system $\M$ of (\ref{statespace}) is minimal, then the LMI \eqref{KYP-LMI}
has a solution $X \geq 0$ if and only if $\M$ is a passive system. Moreover, the solutions of \eqref{KYP-LMI} also satisfy the matrix inequalities
\begin{equation} \label{Xbounds}
 0 < X_- \leq X \leq X_+.
\end{equation}
The matrices $X$ satisfying the matrix inequalities \eqref{Xbounds} also form a convex set, which we call $\mathbb{X}^\pm$. We thus have the following inclusions
$$ \XWpdpd  \subset \XWpd  \subset \mathbb{X}^\pm
$$
which implies that all matrices in the sets $\XWpdpd$ and $\XWpd$ are bounded. Notice also that the $(1,1)$ block in the LMI \eqref{prls},\eqref{LMIsolnsets}
is a discrete-time Lyapunov equation with $X>0$. This implies that $A$ is asymptotically stable if $W(X) > 0$ and is stable if $W(X)\geq 0$, see also \cite{LanT85}.
It is also known that if the system is strictly passive, meaning that $\Phi(e^{\imath \omega})>0$ for the whole unit circle, then $X_- < X_+$.

\begin{remark}\label{rem:rm1}{\rm
The bilinear transformation between continuous-time and discrete-time systems preserves the solution sets $\XWpdpd$ and $\XWpd$
as well as the solutions $X_-$ and $X_+$ of the Riccati equation. It was shown, see \eg \cite{MeV19}, that the set $\mathbb{X}^\pm$ has a nonempty interior if and only if $X_- < X_+$. Since  $\XWpd$ is a subset of $\mathbb{X}^\pm$ it also follows $\XWpd$ has an empty interior when $X_+-X_-$ is singular.
}
\end{remark}

\section{Normalized passive realizations} \label{sec:norm}
A special class of realizations of discrete-time passive systems, are the ones associated to a normalized storage function $\mathcal H(x_k)=\frac12\|x_k\|_2^2$.

\begin{definition} \label{pH}
A {\em normalized passive system} has the state-space form \eqref{statespace_d}
where the system matrices satisfy the matrix inequality
\begin{equation}
 \left[ \begin{array}{cc} I_n & C^{\mathsf{H}} \\ C & D^{\mathsf{H}} +D \end{array} \right] -
 \left[ \begin{array}{cc} A^{\mathsf{H}} \\ B^{\mathsf{H}} \end{array} \right] \left[ \begin{array}{cc} A & B \end{array} \right] \ge 0.
\end{equation}
\end{definition}

We now show that every passive system has an equivalent normalized passive realization.
Consider a minimal state-space model  $\M:=\{A,B,C,D\}$ of a passive linear time-invariant system and let  $X\in \XWpd$ be a solution of the LMI \eqref{KYP-LMI}.
We then use a (Cholesky like)  factorization $X= T^{\mathsf{H}}T$ which implies $\det T \neq 0$ and define a new realization
\[
\M_T:=\{A_T,B_T,C_T,D_T\} := \{TAT^{-1}, TB, CT^{-1}, D \}
\]
so that
\begin{eqnarray}
\left[ \begin{array}{cccc} T^{-\mathsf{H}} & 0\\ 0 & I_m
\end{array}
\right]
\left[ \begin{array}{cccc} X-A^{\mathsf{H}}XA & C^{\mathsf{H}}-A^{\mathsf{H}}XB \\ C-B^{\mathsf{H}}XA & D^{\mathsf{H}}+D-B^{\mathsf{H}}XB
\end{array}
\right]
\left[ \begin{array}{cccc} T^{-1} & 0\\ 0 & I_m
\end{array}
\right] \\
=
\left[\begin{array}{cccc} I_n & C_T^{\mathsf{H}} \\ C_T & D_T^{\mathsf{H}}+D_T\end{array}\right]-
\left[\begin{array}{cc} A_T^{\mathsf{H}} \\ B_T^{\mathsf{H}} \end{array}\right]\left[\begin{array}{cc} A_T & B_T \end{array}\right]
\geq 0,
\end{eqnarray}
which expresses that the transformed realization $\M_T$ is now normalized.
Notice that the factor $T$ is unique up to a unitary factor $U$ since $T^{\mathsf{H}}U^{\mathsf{H}}UT=T^{\mathsf{H}}T$.
This unitary factor does not affect the normalization constraint, but we can choose it to put $A_T$ in a special coordinate system.
Notice that the inequality $I_n-A_T^{\mathsf{H}}A_T\ge 0$ implies that $A_T$ is contractive and has a singular value decomposition
$A_T=U\Sigma V^{\mathsf{H}}$ where $0 \le \Sigma \le I_n$. The additional unitary similarity transformation $\{U^{\mathsf{H}}A_TU,U^{\mathsf{H}}B_T,C_TU,D_T\}$
will then yield a new normalized coordinate system $\{A_{\hat T},B_{\hat T},C_{\hat T},D_{\hat T}\}$ where, in addition, $A_{\hat T}=\Sigma (V^{\mathsf{H}}U)$,
which is a polar decomposition with a positive semidefinite Hermitian factor
$\Sigma$ that is diagonal and satisfies  $0 \le \Sigma \le I_n$ \cite{Higham86}.

Even after the normalization,  there is typically still a lot of freedom in the representation of the system, since we could have used any matrix $X$ from the set $\XWpd$ to normalize our realization.
In the remainder of this paper, we will focus on normalized passive realizations. The freedom remaining is thus the choice of the matrix $X$ from $\XWpd$, which, as we will see, can be used to make the representation more robust, i.e., less sensitive to perturbations. The remainder of this paper will deal with the question of how to make use of this freedom in the state space transformation to determine a 'good' or `nearly optimal' normalized realization.

\section{The passivity radius}\label{sec:passrad}
Our goal is to achieve `good' or `nearly optimal' normalized realizations of a passive system. A natural measure for this is a large
\emph{passivity radius} $\rho_{\M}$, which is the smallest perturbation (in an appropriate norm) to the coefficients of a model $\M $ that causes the perturbed system to loose this property.

Once we have determined a solution $X\in \XWpd$ to the LMI~(\ref{KYP-LMI}), we can determine the normalized representations as discussed in Section~\ref{sec:norm}.
For each such representation we can determine the passivity radius and then choose the solution $X\in \XWpd$ which is most
robust under perturbations ${\Delta_\M}$ of the model parameters $\M:=\{A,B,C,D\}$. This is a suitable approach for perturbation analysis,
since as soon as we fix $X\in \XWpdpd$, we will see that we can solve for the smallest perturbation $\Delta_\M$ to our model $\M $ that makes
$\det W(X,\M+\Delta_\M)=0$.  To measure the size of the perturbation $\Delta_\M$ of a state space model $\M $ we will use the Frobenius norm
or the 2-norm of the matrix $\Delta_\S$ defined as
\begin{equation} \label{DeltaS}
 \Delta_\S:= \left[\begin{array}{ccc}
\Delta_A & \Delta_B \\
\Delta_C & \Delta_D
\end{array}\right]
\end{equation}
and we use also the notion of \emph{$X$-passivity radius}, which was introduced in \cite{BMV18}, and gives a bound for the usual passivity radius.
\begin{definition}
For $X\in \XWpdpd$ the  \emph{$X$-passivity radius} is defined as
\[
	\rho_\M(X):= \inf_{\Delta_\S\in \mathbb C^{n+m,n+m}}\left\{ \| \Delta_\S \| \; | \; \det W(X,\M+\Delta_\M) = 0\right\}.
\]
\end{definition}
Note that in order to compute $\rho_\M(X)$ for the model $\M $, we must have a point $X\in \XWpdpd$, since $W(X,\M)$  must be positive definite to start with and also $X$ should be positive definite to obtain a state-space transformation from it. The following relation between the $X$-passivity radius and the usual passivity radius was already presented in \cite{BMV18}.
\begin{lemma} \label{lem:passrad}
	The passivity radius for a given model $\M$ satisfies
	\begin{equation}
\nonumber
	\rho_{\M}:= \sup_{X\in \XWpdpd}\inf_{\Delta_\S\in \mathbb C^{n+m,n+m}}\{\| \Delta_\S \| | \det W(X,\M+\Delta_\M)=0\}= \sup_{X\in \XWpdpd} \rho_{\M}(X).\label{passive}
	\end{equation}
\end{lemma}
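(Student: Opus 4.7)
If one reads the lemma as simply \emph{defining} $\rho_\M$ via the sup-inf, then the trailing equality $=\sup_X \rho_\M(X)$ is a one-line substitution of the definition of the $X$-passivity radius given immediately above, and nothing is left to prove. I will therefore interpret the lemma as asserting that the sup-inf formula (equivalently $\sup_X \rho_\M(X)$) coincides with the informal passivity radius introduced earlier, namely the smallest perturbation $\Delta_\S$ that destroys strict passivity of $\M+\Delta_\M$.

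My plan for the direction $\rho_\M \ge \sup_X\rho_\M(X)$ is a short continuity-of-eigenvalues argument. Fix any $X \in \XWpdpd(\M)$, so $W(X,\M)>0$, and any $\Delta_\S$ with $\|\Delta_\S\| < \rho_\M(X)$. Consider the continuous curve
\[
	t \;\longmapsto\; W(X, \M + t\Delta_\M), \qquad t\in[0,1],
\]
of Hermitian matrices in $\Hn$. By the definition of $\rho_\M(X)$ this curve is nonsingular for every $t\in[0,1]$, since the perturbation $t\Delta_\M$ has norm strictly below $\rho_\M(X)$. A Hermitian matrix cannot leave the positive-definite cone without first becoming singular, so positive definiteness at $t=0$ propagates to $t=1$, giving $W(X,\M+\Delta_\M)>0$. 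Thus $X$ certifies strict passivity of $\M+\Delta_\M$, which shows that the passivity radius is at least $\rho_\M(X)$; taking the supremum over $X\in\XWpdpd$ yields the inequality.

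The reverse inequality $\rho_\M \le \sup_X \rho_\M(X)$ is the main obstacle. For any $\Delta_\S$ whose norm is strictly less than the passivity radius, every model $\M+\Delta_\M'$ with $\|\Delta_\M'\|\le \|\Delta_\S\|$ is strictly passive and hence admits some witness $X_{\Delta'}\in\XWpdpd(\M+\Delta')$; the task is to consolidate this $\Delta'$-dependent family of witnesses into a \emph{single} $X\in\XWpdpd(\M)$ satisfying $W(X,\M+\Delta_\M')>0$ uniformly for $\|\Delta_\M'\|\le\|\Delta_\S\|$. I would attempt this by exploiting (i) the convexity of the set $\{X : W(X,\M+\Delta')>0\}$ in $X$ for each fixed $\Delta'$, and (ii) the boundedness of $\mathbb{X}^\pm$ recorded in Remark \ref{rem:rm1}, combined with a Sion / minimax argument applied to the function $(X,\Delta_\S)\mapsto \lambda_{\min}(W(X,\M+\Delta_\M))$ on $\XWpdpd \times \{\Delta_\S : \|\Delta_\S\|\le r\}$, or alternatively a continuous-selection argument along the path $t\mapsto X_{t\Delta}$. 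This consolidation is the technical heart of the lemma; the rest of the proof is essentially bookkeeping.
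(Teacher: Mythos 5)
First, a point of calibration: the paper offers no proof of this lemma at all. It is stated with a ``$:=$'', i.e., as a definition of $\rho_\M$ via the sup--inf, followed by the trivial rewriting of the inner infimum as $\rho_\M(X)$; the substantive identification with the informally described passivity radius (``the smallest perturbation that causes the perturbed system to lose passivity'') is deferred entirely to the citation of Beattie--Mehrmann--Van Dooren. So your observation that, read literally, nothing remains to prove is exactly how this paper treats the statement, and in choosing to prove the substantive version you are going beyond what the paper does. For the direction $\rho_\M \ge \sup_{X}\rho_\M(X)$ your argument is correct and is the standard one: for fixed $X\in\XWpdpd$ the curve $t\mapsto W(X,\M+t\Delta_\M)$ is continuous, Hermitian-valued, nonsingular for all $t\in[0,1]$ when $\|\Delta_\S\|<\rho_\M(X)$, and hence stays positive definite, so $X$ itself certifies strict passivity of $\M+\Delta_\M$.

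For the reverse inequality, however, you give only a plan, and the plan as stated would not go through. You propose Sion's minimax theorem for $(X,\Delta_\S)\mapsto \lambda_{\min}\bigl(W(X,\M+\Delta_\M)\bigr)$. This function is indeed concave in $X$ for fixed $\Delta_\S$ (the dependence on $X$ is affine), but it is also \emph{concave} --- not quasi-convex --- in $\Delta_\S$ for fixed $X>0$, because the perturbation enters through $-\bigl[\,A+\Delta_A \;\; B+\Delta_B\,\bigr]^{\mathsf H} X \bigl[\,A+\Delta_A \;\; B+\Delta_B\,\bigr]$, which is matrix-concave in $(\Delta_A,\Delta_B)$ when $X>0$. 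Sion's theorem therefore does not apply in the minimizing variable, and the alternative ``continuous selection along $t\mapsto X_{t\Delta}$'' is too vague to assess. The consolidation step you correctly single out as the technical heart --- replacing the $\Delta$-dependent certificates $X_{\Delta}$ by a single $X$ whose $X$-passivity radius approaches the true radius --- is thus still missing from your write-up, so as a proof of the substantive statement this is incomplete; as a verification of what the present paper actually asserts and proves, your opening observation already suffices.
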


We now provide an exact formula for the $X$-passivity radius based on a one parameter optimization problem. For this, we point out that the condition $W(X,\M+\Delta_\M)>0$ is
equivalent to the condition
\begin{equation}
\widehat W(X,\M+\Delta_\M):= \left[\begin{array}{ccc} X^{-1} &  A + \Delta_A & B+\Delta_B \\  A^{\mathsf{H}}+\Delta_A^{\mathsf{H}} & X & C^{\mathsf{H}}+\Delta_C^{\mathsf{H}}  \\ B^{\mathsf{H}}+\Delta_B^{\mathsf{H}} &
C+\Delta_C & D^{\mathsf{H}}+\Delta_D^{\mathsf{H}} + D+\Delta_D  \end{array}\right] > 0,
\end{equation}
which is now an LMI in the unknown parameters of $\Delta_\M$ (for a fixed $X$).
Setting
\begin{equation} \label{defWhatX}
\widehat W:=\widehat W(X,\M)=\left[\begin{array}{ccc} X^{-1} &  A & B \\  A^{\mathsf{H}} & X & C^{\mathsf{H}} \\ B^{\mathsf{H}} &
C & D^{\mathsf{H}}+ D  \end{array}\right], \quad E:= \left[\begin{array}{c|c} E_1 & E_2  \end{array}\right] \left[\begin{array}{cc|cc} I_n & 0 & 0 & 0 \\ 0 & 0 & I_n & 0 \\ 0 & I_m & 0 & I_m \end{array}\right],
\end{equation}
and using the matrix $\Delta_\S$ in \eqref{DeltaS}, this inequality can be written as the structured LMI
\begin{equation} \label{WDelta}
\widehat W+ E \left[\begin{array}{cc} 0 & \Delta_\S \\ \Delta_\S^\mathsf{H} & 0
\end{array}\right]E^\mathsf{T} > 0
\end{equation}
as long as the system is still passive. In order to violate this condition, we need to find the smallest $\Delta_\S$ such that the determinant of \eqref{WDelta} becomes 0.
Since $\widehat W$ is positive definite, we can then construct its Cholesky factorization $\widehat W := R^\mathsf{H}R$. The matrix in \eqref{WDelta} will become singular when the matrix
\begin{equation} \label{WDelta2}
I_{2n+m}+ R^{-\mathsf{H}} E \left[\begin{array}{cc} 0 & \Delta_\S \\ \Delta_\S^\mathsf{H} & 0
          \end{array}\right]E^\mathsf{T} R^{-1}
\end{equation}
becomes singular. The following theorem, is analogous to results obtained for continuous-time systems \cite{BMV18,MeV19,OvV05}, and
we therefore omit the proof. It gives for this kind of problem the minimum norm perturbation $\Delta_\S$ both in Frobenius norm and in 2-norm.
\begin{theorem} \label{thm:mingamma}
Consider the matrices $\hat X, \widehat W=R^\mathsf{H}R$ in (\ref{defWhatX}) and the pointwise positive semidefinite matrix function
\begin{equation}\label{defmgamma}
M(\gamma):= \left[ \begin{array}{cc} \gamma F_1^{\mathsf{H}}  \\ \gamma^{-1}F_2^{\mathsf{H}} \end{array}\right]
\left[ \begin{array}{cc} \gamma F_1 & \gamma^{-1} F_2 \end{array}\right],  \;\;
F_1:=R^{-\mathsf{H}}E_1, \;\; F_2:=R^{-\mathsf{H}} E_2
\end{equation}
in the real parameter $\gamma \in(0,\infty)$. Then the largest eigenvalue $\lambda_{\max}(M(\gamma))$ is a \emph{unimodal function} of $\gamma$ ({i.e.} it is first monotonically decreasing and then monotonically increasing with growing $\gamma$). At the minimizing value $\underline \gamma$,  $M(\underline{\gamma})$ has an eigenvector $z$, {i.e.}
\[
 M(\underline{\gamma}) z = \underline\lambda_{\max} z, \quad z:=\left[ \begin{array}{cc} u \\ v \end{array}\right],
 \]
where
$  \|u\|_2^2=\|v\|_2^2=1$.
The minimum norm perturbation $\Delta_\S$ is of rank $1$ and is given by $\Delta_\S=uv^{\mathsf{H}}/\underline{\lambda}_{\max}$. It has norm $1/\underline{\lambda}_{\max}$ both in 2-norm and in Frobenius norm.
\end{theorem}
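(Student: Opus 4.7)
The plan is to recast the singularity condition as a parametric operator-norm bound, derive a lower bound on $\|\Delta_\S\|$ that is uniform in $\gamma$, and then produce a rank-one minimizer from the extremal eigenvector of $M(\underline\gamma)$. By \eqref{WDelta2}, passivity is lost precisely when $I + R^{-\mathsf{H}} E P E^{\mathsf{T}} R^{-1}$ is singular with $P := \begin{pmatrix} 0 & \Delta_\S \\ \Delta_\S^{\mathsf{H}} & 0\end{pmatrix}$, and exploiting the block structure of $E$ reduces this to singularity of $I + F_1 \Delta_\S F_2^{\mathsf{H}} + F_2\Delta_\S^{\mathsf{H}} F_1^{\mathsf{H}}$. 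The key observation is the $\gamma$-invariance
\[
F_1\Delta_\S F_2^{\mathsf{H}} + F_2\Delta_\S^{\mathsf{H}} F_1^{\mathsf{H}} \;=\; G(\gamma)\,P\,G(\gamma)^{\mathsf{H}}, \qquad G(\gamma) := \begin{pmatrix} \gamma F_1 & \gamma^{-1} F_2 \end{pmatrix}.
\]
Combined with $\|G(\gamma)\|_2^2 = \lambda_{\max}(M(\gamma))$ and $\|P\|_2 = \|\Delta_\S\|_2$, singularity forces $\lambda_{\max}(M(\gamma))\,\|\Delta_\S\|_2 \ge 1$ for every $\gamma > 0$; optimizing this $\gamma$-uniform estimate yields $\|\Delta_\S\|_2 \ge 1/\underline\lambda_{\max}$, and the same inequality holds for $\|\Delta_\S\|_F$ since $\|\cdot\|_F \ge \|\cdot\|_2$.

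Unimodality follows from convexity in the logarithmic variable. From the variational characterization
\[
\lambda_{\max}(M(\gamma)) = \max_{\|u\|_2^2 + \|v\|_2^2 = 1}\Bigl(\gamma^2\|F_1 u\|_2^2 + 2\,\Re(u^{\mathsf{H}} F_1^{\mathsf{H}} F_2 v) + \gamma^{-2}\|F_2 v\|_2^2\Bigr),
\]
the inner function is, for each fixed $(u,v)$, a convex function of $t := \log\gamma$ (both $t\mapsto e^{2t}$ and $t\mapsto e^{-2t}$ are convex), so the pointwise supremum $t\mapsto \lambda_{\max}(M(e^t))$ is convex on $\mathbb{R}$. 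Together with its blow-up as $t\to\pm\infty$ (which holds because minimality of $\M$ prevents $F_1$ or $F_2$ from having trivial column space), this gives a unique minimizer $\underline t = \log\underline\gamma$ and hence unimodality of $\gamma\mapsto \lambda_{\max}(M(\gamma))$ on $(0,\infty)$.

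Finally I would construct the optimizer. Let $z = (u^{\mathsf{T}}, v^{\mathsf{T}})^{\mathsf{T}}$ be a unit eigenvector of $M(\underline\gamma)$ for $\underline\lambda_{\max}$. The stationarity condition $z^{\mathsf{H}} M'(\underline\gamma) z = 0$, read off the block-diagonal derivative of $M$, gives $\underline\gamma^2\|F_1 u\|_2^2 = \underline\gamma^{-2}\|F_2 v\|_2^2$. Taking real parts of the two scalar equations obtained by premultiplying the block rows of $M(\underline\gamma)z = \underline\lambda_{\max} z$ by $u^{\mathsf{H}}$ and $v^{\mathsf{H}}$ and subtracting cancels the coupling terms and leaves $\underline\lambda_{\max}(\|u\|_2^2 - \|v\|_2^2) = \underline\gamma^2\|F_1 u\|_2^2 - \underline\gamma^{-2}\|F_2 v\|_2^2 = 0$, so $\|u\|_2 = \|v\|_2$, and we rescale to $\|u\|_2 = \|v\|_2 = 1$. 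Setting $\Delta_\S := -uv^{\mathsf{H}}/\underline\lambda_{\max}$ and $w := \underline\gamma F_1 u + \underline\gamma^{-1}F_2 v$, the eigenvalue equation gives $v^{\mathsf{H}} F_2^{\mathsf{H}} w = \underline\gamma\,\underline\lambda_{\max}$ and $u^{\mathsf{H}} F_1^{\mathsf{H}} w = \underline\gamma^{-1}\underline\lambda_{\max}$, whence direct substitution shows $(I + F_1\Delta_\S F_2^{\mathsf{H}} + F_2\Delta_\S^{\mathsf{H}} F_1^{\mathsf{H}})w = 0$. Since $\Delta_\S$ has rank one, $\|\Delta_\S\|_2 = \|\Delta_\S\|_F = 1/\underline\lambda_{\max}$, matching the lower bound in both norms. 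The main obstacle is combining first-order stationarity with the block structure of the eigenvector to conclude $\|u\|_2 = \|v\|_2$; convexity in $\log\gamma$ is the second essential ingredient, and everything else reduces to routine submultiplicative estimates and direct substitution.
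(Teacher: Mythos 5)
Your argument is correct and is essentially the proof that the paper omits and delegates to \cite{BMV18,OvV05}: the same $\diag(\gamma I,\gamma^{-1}I)$ scaling that leaves $F_1\Delta_\S F_2^{\mathsf{H}}+F_2\Delta_\S^{\mathsf{H}}F_1^{\mathsf{H}}$ invariant, convexity in $\log\gamma$ to get unimodality of $\lambda_{\max}(M(\gamma))$, and the rank-one construction from the extremal eigenvector, with first-order stationarity in $\gamma$ forcing $\|u\|_2=\|v\|_2$. The only discrepancy is the sign of the optimal perturbation (your $-uv^{\mathsf{H}}/\underline{\lambda}_{\max}$ is the one that actually annihilates $w$), which is a convention issue and does not affect the rank or the norm claims.
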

A simple bound for $\underline{\lambda}_{\max}$ can also be obtained, as pointed out in \cite{BMV18} for the continuous-time case. The proof is essentially the same and is therefore omitted.
\begin{corollary}\label{cor:lev} Consider the matrices $\widehat W$, $F_1$, $F_2$ and $M(\gamma)$ in Theorem \ref{thm:mingamma},
and define $\alpha:=\|F_1\|_2$ and $\beta:=\|F_2\|_2$.
Then the norm of $M(\gamma)$ is also the norm of $\gamma^2 F_1F_1^{\mathsf{H}} + \gamma^{-2}F_2F_2^{\mathsf{H}}$, and
\[ \underline{\lambda}_{\max} = \|M(\underline{\gamma})\|_2 = \min_{\gamma>0} \|M(\gamma)\|_2 = \min_{\gamma>0}
\|\gamma^2 F_1F_1^{\mathsf{H}} + \gamma^{-2}F_2F_2^{\mathsf{H}}\|_2\le 2\|F_1\|_2\|F_2\|_2=2\alpha\beta.
\]
This upper bound is reached if and only if
the matrices $F_1F_1^{\mathsf{H}}$ and $F_2F_2^{\mathsf{H}}$ have a common eigenvector associted with the maximal eigenvalue.
\end{corollary}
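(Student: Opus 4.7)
The plan is to prove the corollary in three stages: recast $\|M(\gamma)\|_2$ in the claimed equivalent form, apply a triangle inequality together with AM--GM to obtain the upper bound, and finally characterize the equality case via the sharp form of the triangle inequality for the spectral norm of positive semidefinite matrices.

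First, I would write $M(\gamma) = U(\gamma)^{\mathsf{H}} U(\gamma)$ with $U(\gamma) := \left[\begin{array}{cc} \gamma F_1 & \gamma^{-1} F_2 \end{array}\right]$. Since $U^{\mathsf{H}} U$ and $U U^{\mathsf{H}}$ share the same nonzero eigenvalues, and hence the same spectral norm, this gives
\[
\|M(\gamma)\|_2 \;=\; \|U(\gamma) U(\gamma)^{\mathsf{H}}\|_2 \;=\; \|\gamma^2 F_1 F_1^{\mathsf{H}} + \gamma^{-2} F_2 F_2^{\mathsf{H}}\|_2.
\]
The identities $\underline{\lambda}_{\max} = \|M(\underline\gamma)\|_2 = \min_{\gamma>0}\|M(\gamma)\|_2$ are then inherited directly from Theorem~\ref{thm:mingamma}, using that $M(\gamma)$ is positive semidefinite so its largest eigenvalue coincides with its spectral norm.

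For the upper bound, the triangle inequality yields $\|\gamma^2 F_1 F_1^{\mathsf{H}} + \gamma^{-2} F_2 F_2^{\mathsf{H}}\|_2 \le \gamma^2 \alpha^2 + \gamma^{-2}\beta^2$, and AM--GM gives $\gamma^2 \alpha^2 + \gamma^{-2}\beta^2 \ge 2\alpha\beta$ with equality iff $\gamma^2 = \beta/\alpha$. Minimizing over $\gamma > 0$ therefore produces $\min_{\gamma>0}\|\gamma^2 F_1 F_1^{\mathsf{H}} + \gamma^{-2} F_2 F_2^{\mathsf{H}}\|_2 \le 2\alpha\beta$, and combining with the first step gives $\underline{\lambda}_{\max} \le 2\alpha\beta$.

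The more delicate step is the equality characterization. Attaining $2\alpha\beta$ forces both the triangle inequality and the AM--GM inequality to be tight simultaneously at the minimizing $\gamma$. For positive semidefinite Hermitian matrices $A$ and $B$, Weyl's inequality gives $\lambda_{\max}(A+B) \le \lambda_{\max}(A) + \lambda_{\max}(B)$, and the standard Rayleigh-quotient argument shows that equality holds precisely when $A$ and $B$ admit a common unit eigenvector for their respective maximal eigenvalues. Specializing to $A = \gamma^2 F_1 F_1^{\mathsf{H}}$ and $B = \gamma^{-2} F_2 F_2^{\mathsf{H}}$, and noting that scaling by a positive constant does not change eigenvectors, this is equivalent to $F_1 F_1^{\mathsf{H}}$ and $F_2 F_2^{\mathsf{H}}$ sharing an eigenvector associated with the maximal eigenvalues $\alpha^2$ and $\beta^2$. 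Conversely, any common unit vector $w$ with $F_1 F_1^{\mathsf{H}} w = \alpha^2 w$ and $F_2 F_2^{\mathsf{H}} w = \beta^2 w$ is an eigenvector of $\gamma^2 F_1 F_1^{\mathsf{H}} + \gamma^{-2} F_2 F_2^{\mathsf{H}}$ with eigenvalue $\gamma^2 \alpha^2 + \gamma^{-2}\beta^2$, which equals $2\alpha\beta$ at $\gamma^2 = \beta/\alpha$; this forces equality in the upper bound. The main (though not deep) obstacle is thus a clean statement of when Weyl's bound is sharp for a sum of two PSD matrices; the rest is a short manipulation.
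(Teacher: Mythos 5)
Your proof is correct. Note that the paper does not actually supply a proof of Corollary~\ref{cor:lev}: it states that the argument is ``essentially the same'' as in the continuous-time reference and omits it, so your write-up fills in exactly the omitted details, and it does so by the natural route --- identifying $\|M(\gamma)\|_2=\|U(\gamma)^{\mathsf{H}}U(\gamma)\|_2$ with $\|U(\gamma)U(\gamma)^{\mathsf{H}}\|_2=\|\gamma^2F_1F_1^{\mathsf{H}}+\gamma^{-2}F_2F_2^{\mathsf{H}}\|_2$ via equality of the nonzero spectra, then Weyl plus AM--GM for the bound, and the Rayleigh-quotient characterization of when $\lambda_{\max}(A+B)=\lambda_{\max}(A)+\lambda_{\max}(B)$ for Hermitian positive semidefinite $A,B$ for the equality case. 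One small tightening for the ``only if'' direction: rather than speaking of tightness ``at the minimizing $\gamma$'' (which is mildly circular), evaluate at $\gamma_0$ with $\gamma_0^2=\beta/\alpha$ and observe that $2\alpha\beta=\min_\gamma\|M(\gamma)\|_2\le\|M(\gamma_0)\|_2\le\gamma_0^2\alpha^2+\gamma_0^{-2}\beta^2=2\alpha\beta$, which shows $\gamma_0$ is a minimizer and that Weyl's inequality is tight there; the rest of your argument then goes through verbatim.
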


The following theorem is a variant of a result proven in \cite{BMV18}, and constructs a rank one perturbation which makes the matrix
$W_{\M+\Delta_\M}$ singular and therefore gives an upper bound for $\rho_M(X)$.

\begin{theorem}\label{thm:Xpassivity}
Let $\M=\{A,B,C,D\}$ be a given minimal passive discrete-time model and assume that we are given a matrix $X\in \XWpdpd$, then the $X$-passivity radius $\rho_\M(X)$
is bounded by
$$ 1/(2\alpha\beta) \le \rho_\M(X) \le  1/[(1+|\hat v^{\mathsf{H}}\hat u|)(\alpha\beta)]\le 1/(\alpha\beta),$$
where $\hat u, u$ and $\hat v, v$ are normalized dominant singular vector pairs of $F_1$ and $F_2$, respectively~:
$$F_1 u=\alpha \hat u, \;\; F_1^{\mathsf{H}} \hat u=\alpha u, \;\; F_2 v=\beta \hat v, \;\; F_2^{\mathsf{H}} \hat v=\beta v.$$
Moreover, if $\hat u$ and $\hat v$ are linear dependent, then $\rho_\M(X)=1/(2\alpha\beta)$.
\end{theorem}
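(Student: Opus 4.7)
The lower bound follows immediately from Theorem~\ref{thm:mingamma} and Corollary~\ref{cor:lev}: the former gives $\rho_\M(X)=1/\underline{\lambda}_{\max}$ and the latter gives $\underline{\lambda}_{\max}\le 2\alpha\beta$, so taking reciprocals yields $\rho_\M(X)\ge 1/(2\alpha\beta)$. For the upper bound I would exhibit an explicit rank-one perturbation built from the dominant singular vectors of $F_1$ and $F_2$ and bound its norm.

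\textbf{Upper bound via explicit perturbation.} Take the ansatz $\Delta_\S := c\,uv^{\mathsf{H}}$ with $c\in\mathbb{C}$ to be chosen. Unfolding the block-selector inside $E$, the perturbation term in~\eqref{WDelta2} takes the form $F_1\Delta_\S F_2^{\mathsf{H}}+F_2\Delta_\S^{\mathsf{H}} F_1^{\mathsf{H}}$, and substituting $F_1u=\alpha\hat u$, $F_2v=\beta\hat v$ collapses this to the rank-two Hermitian matrix
\[
c\,\alpha\beta\,\hat u\hat v^{\mathsf{H}} \; + \; \bar c\,\alpha\beta\,\hat v\hat u^{\mathsf{H}},
\]
supported on $\mathrm{span}\{\hat u,\hat v\}$. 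Factoring this as $[\hat u\;\hat v]\,\mathrm{diag}(c\alpha\beta,\bar c\alpha\beta)\,[\hat v\;\hat u]^{\mathsf{H}}$ and applying the Sylvester identity $\det(I+UKV^{\mathsf{H}})=\det(I_2+KV^{\mathsf{H}} U)$ with $t:=\hat v^{\mathsf{H}}\hat u$, the singularity condition for~\eqref{WDelta2} reduces to the scalar equation $|1+c\alpha\beta t|^2=|c|^2\alpha^2\beta^2$. Choosing $\arg c$ so that $c\alpha\beta t$ is a negative real number turns this into $(1-|c|\alpha\beta|t|)^2=|c|^2\alpha^2\beta^2$, whose smallest positive root is $|c|_{\min}=1/[(1+|t|)\alpha\beta]$. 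Since $\|uv^{\mathsf{H}}\|_2=\|uv^{\mathsf{H}}\|_F=1$, the resulting $\Delta_\S$ has both $2$-norm and Frobenius norm equal to $|c|_{\min}$, giving $\rho_\M(X)\le 1/[(1+|\hat v^{\mathsf{H}}\hat u|)\alpha\beta]$. The final inequality $\le 1/(\alpha\beta)$ is trivial since $|\hat v^{\mathsf{H}}\hat u|\ge 0$.

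\textbf{Equality case and main obstacle.} If $\hat u$ and $\hat v$ are linearly dependent, then $|\hat v^{\mathsf{H}}\hat u|=1$, and the upper and lower bounds coincide at $1/(2\alpha\beta)$, forcing equality. This is also consistent with Corollary~\ref{cor:lev}, since in that case $F_1F_1^{\mathsf{H}}$ and $F_2F_2^{\mathsf{H}}$ share a dominant eigenvector, the stated equality condition there. The principal technical obstacle is justifying the first reduction: that the perturbation term appearing in~\eqref{WDelta2} genuinely equals $F_1\Delta_\S F_2^{\mathsf{H}}+F_2\Delta_\S^{\mathsf{H}} F_1^{\mathsf{H}}$. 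This requires a careful factorization of the block-selector structure implicit in $E$ (essentially verifying that $E\,[\cdot]\,E^{\mathsf{T}}=E_1\Delta_\S E_2^{\mathsf{T}}+E_2\Delta_\S^{\mathsf{H}} E_1^{\mathsf{T}}$ for the constituent matrices $E_1,E_2$), but it mirrors the analogous continuous-time computation of~\cite{BMV18, MeV19} and introduces no new conceptual difficulty; the remaining scalar optimization over $\arg c$ is elementary.
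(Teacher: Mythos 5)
Your proof is correct and follows essentially the same route as the argument the paper invokes: the paper gives no proof here, only the remark that it is ``analogous to the continuous-time case'' of \cite{BMV18,MeV19}, and that continuous-time proof is precisely your construction — the lower bound from Theorem~\ref{thm:mingamma} together with Corollary~\ref{cor:lev}, and the upper bound from the rank-one ansatz $\Delta_\S=c\,uv^{\mathsf{H}}$ reduced via the Sylvester determinant identity to the scalar condition $|1+c\alpha\beta\,\hat v^{\mathsf{H}}\hat u|^2=|c|^2\alpha^2\beta^2$ with optimal phase. The block identity you flag as the main obstacle does hold, since $E\left[\begin{smallmatrix}0&\Delta_\S\\\Delta_\S^{\mathsf{H}}&0\end{smallmatrix}\right]E^{\mathsf{T}}=E_1\Delta_\S E_2^{\mathsf{T}}+E_2\Delta_\S^{\mathsf{H}}E_1^{\mathsf{T}}$ reproduces exactly the perturbation of $\widehat W$, so conjugating by $R^{-\mathsf{H}}$ gives $F_1\Delta_\S F_2^{\mathsf{H}}+F_2\Delta_\S^{\mathsf{H}}F_1^{\mathsf{H}}$ as you assumed.
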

\begin{proof} The proof is analogous to the continuous-time case, see \cite{MeV19}.
\end{proof}

Finally, we point out here that in order to maximize the passivity radius of a
system model $\M$, one should maximize the smallest eigenvalue of the scaled
matrix $\widehat W(X,\M)$. Let $D_s=\diag(I_n,I_n,I_m/\sqrt{2})$ and let us scale the
inequality \eqref{WDelta} with the matrix $D_s$~ given by
\begin{equation} \label{Wscale} D_s \widehat W(X,\M) D_s + D_s E \left[\begin{array}{cc} 0 & \Delta_\S \\ \Delta_\S^\mathsf{H} & 0
\end{array}\right]E^\mathsf{T} D_s
\end{equation}
where now $D_sE$ is an isometry. It then follows that in order to have a perturbation
$\Delta_\S$ of norm $\rho_\M(X)$ that makes \eqref{Wscale} singular, we must have
\begin{equation} \label{rhobound}
 \lambda_{\min}(D_s\widehat W(X,\M) D_s)\le \rho_\M(X).
\end{equation}
This bound expresses that if we want to maximize $\rho_\M(X)$ over all $X\in \XWpd$, we should try to maximize $\lambda_{\min}(D_s\widehat W(X,\M)D_s).$
The following result shows that normalized passive realizations can be expected to have a larger minimal eigenvalue in the matrix $D_s\widehat W(I,\M_T)D_s$ than the corresponding minimal eigenvalue of the non-normalized matrix $D_s\widehat W(X,\M)D_s$.
\begin{lemma}
Let $X\in \XWpdpd$ then the trace of the matrix
\[  \min_{\det T\neq 0} \trace[\diag(T,T^{-\mathsf{H}},I_m)(D_s\widehat W(X,\M)D_s) \diag(T^\mathsf{H},T^{-1},I_m)]= \trace (D_s\widehat W(I,\M_T)D_s)
\]
is minimized by the matrices $T$ such that $X=T^\mathsf{H}T$, while the determinant remains invariant
\[ \det[\diag(T,T^{-\mathsf{H}},I_m)(D_s\widehat W(X,\M)D_s) \diag(T^\mathsf{H},T^{-1},I_m)]= \det (D_s\widehat W(I,\M_T)D_s).
\]
\end{lemma}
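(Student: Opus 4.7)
The plan is to exploit that $T_0 := \diag(T, T^{-\mathsf{H}}, I_m)$ and the scaling $D_s$ are block-diagonal with matching block structure and therefore commute, so
$$T_0 \, D_s \widehat W(X,\M) D_s \, T_0^\mathsf{H} = D_s \bigl[T_0 \widehat W(X,\M) T_0^\mathsf{H}\bigr] D_s.$$
A block-by-block computation of the inner conjugation yields diagonal blocks $T X^{-1} T^\mathsf{H}$, $T^{-\mathsf{H}} X T^{-1}$, $D^\mathsf{H}+D$ and off-diagonal blocks $T A T^{-1} = A_T$, $T B = B_T$, $T^{-\mathsf{H}} C^\mathsf{H} = C_T^\mathsf{H}$. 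In particular, when $X = T^\mathsf{H} T$ the two variable diagonal blocks collapse to $I_n$ and the whole matrix becomes exactly $\widehat W(I, \M_T)$.

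The determinant claim is then immediate: for every invertible $T$ one has $\det(T_0)\,\det(T_0^\mathsf{H}) = 1$, so $\det[T_0 D_s \widehat W(X,\M) D_s T_0^\mathsf{H}] = \det[D_s \widehat W(X,\M) D_s]$ independently of $T$, and evaluating at any $T$ with $X = T^\mathsf{H} T$ identifies this common value with $\det[D_s \widehat W(I, \M_T) D_s]$.

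For the trace, $D_s$ leaves the first two diagonal blocks of $\widehat W$ untouched, so
$$\trace\bigl[T_0 D_s \widehat W(X,\M) D_s T_0^\mathsf{H}\bigr] = \trace(T X^{-1} T^\mathsf{H}) + \trace(T^{-\mathsf{H}} X T^{-1}) + \tfrac12 \trace(D^\mathsf{H}+D).$$
Setting $P := T^\mathsf{H} T$ and using cyclicity, the $T$-dependent part becomes $\trace(X^{-1} P) + \trace(X P^{-1})$. The substitution $Q := X^{-1/2} P X^{-1/2}$, which is positive definite, converts this into $\trace(Q) + \trace(Q^{-1}) = \sum_i \bigl(\lambda_i(Q) + \lambda_i(Q)^{-1}\bigr)$. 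Since $\lambda + \lambda^{-1} \geq 2$ for every $\lambda > 0$ with equality iff $\lambda = 1$, this sum is at least $2n$ with equality iff $Q = I$, i.e.\ $P = X$. As $P = T^\mathsf{H} T$ ranges over all positive definite matrices when $T$ ranges over invertible ones, the minimizers are precisely those $T$ with $T^\mathsf{H} T = X$, for which the first paragraph identifies the trace with $\trace[D_s \widehat W(I, \M_T) D_s]$.

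The only substantive step is the matrix AM--GM inequality $\trace(Q) + \trace(Q^{-1}) \geq 2n$, which reduces to the elementary scalar bound after diagonalizing the Hermitian $Q$; everything else is bookkeeping of block operations and the commutation of the block-diagonal scaling $D_s$ with $T_0$.
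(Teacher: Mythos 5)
Your proof is correct and follows essentially the same route as the paper: reduce the trace to $\trace Z+\trace Z^{-1}+\tfrac12\trace(D^{\mathsf H}+D)$ for a positive definite $Z$ (your $Q=X^{-1/2}T^{\mathsf H}TX^{-1/2}$ has the same eigenvalues as the paper's $Z=TX^{-1}T^{\mathsf H}$) and minimize via the scalar bound $\lambda+\lambda^{-1}\ge 2$. You merely spell out the step the paper dismisses as ``well known,'' and your determinant and block-identification arguments match the paper's congruence observation.
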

\begin{proof} Note that transformation applied to $D_s\widehat W(X,\M)D_s$ is a congruence transformation
which preserves the nonnegativity of its eigenvalues and that the trace of the resulting matrix is
 $\trace Z + \trace Z^{-1} + \frac12\trace(D^\mathsf{H}+D)$, where $Z:=TX^{-1}T^\mathsf{H}$. It is well known that this is minimized when $Z=I$.
The fact that the congruence transformation preserves the determinant identity is obvious.
\end{proof}
This lemma suggests that the smallest eigenvalue should increase as the product of all the eigenvalues remains constant and their sum is being minimized,
but this is of course not guaranteed in general.

\section{Maximizing the passivity radius} \label{sec:maxpass}

In this section we discuss another LMI in the matrices $X>0$ with the same domain as $W(X,\M)\ge 0$, given by
\[
\widetilde W(X,\M) :=\left[\begin{array}{ccc} X &  XA & XB \\  A^{\mathsf{H}}X & X & C^{\mathsf{H}} \\ B^{\mathsf{H}}X &
C & D^{\mathsf{H}}+ D  \end{array}\right] \ge 0.\]
It is clear that $\widetilde W(X,\M)$ is congruent to $\diag(X,W(X,M))$ and since $X>0$, it has the same solution set $\XWpd$ as $W(X,M)\ge 0$.
The LMI for the normalized passive realization $\M_T=\{TAT^{-1},TB,CT^{-1},D\}$  corresponding to $X=T^\mathsf{H}T$, can be obtained via a congruence transformation as well
{\small
\[
\widetilde W(I,\M_T) := \left[\begin{array}{ccc} I_n & A_T & B_T \\  A_T^{\mathsf{H}} & I_n & C_T^{\mathsf{H}} \\ B_T^{\mathsf{H}} &
C_T & D_T^{\mathsf{H}}\!+\!D_T  \end{array}\right] = \left[\begin{array}{ccc}  T^{-{\mathsf{H}}} & 0 & 0 \\ 0 & T^{-{\mathsf{H}}} & 0 \\ 0 & 0 & I_m \end{array}\right] \widetilde W(X,\M)  \left[\begin{array}{ccc}  T^{-1} & 0 & 0 \\ 0 & T^{-1} & 0 \\ 0 & 0 & I_m \end{array}\right]\ge 0.
\]
}
Let us now consider the following constrained LMI
\begin{equation} \label{xi}
 \widetilde W(X,\M) \ge \xi \diag(X,X,2I_m).
\end{equation}
Then the following Theorem gives a bound on how large we can choose $\xi$ in this LMI.
\begin{theorem}  \label{thm:maxoverX}
Let $\M:=\{A,B,C,D\}$ be a minimal realization of a discrete-time passive system, and let $X$ be any matrix in $\XWpd$. Then there is a unique $\xi^*(X)$ which is maximal for the matrix inequality \eqref{xi} to hold, and which is strictly smaller that 1. Moreover, $\xi^*(X) =\lambda_{\min} (D_s\widetilde W(I,\M_T)D_s).$
\end{theorem}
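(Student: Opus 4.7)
The plan is to reduce the constrained inequality \eqref{xi} to a standard eigenvalue problem by a single congruence transformation, which will simultaneously yield existence, uniqueness of $\xi^*(X)$, and the claimed formula; strictness of the bound $\xi^*(X)<1$ will then follow from minimality of $\M$ by inspecting a principal submatrix in the limiting case $\xi=1$.

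First, I would factor $X=T^{\mathsf{H}}T$ with $T$ invertible (as in Section~\ref{sec:norm}) and set $P:=\diag(T,T,\sqrt{2}\,I_m)$, so that $\diag(X,X,2I_m)=P^{\mathsf{H}}P$. The key bookkeeping observation is the identity $\diag(T,T,I_m)\,P^{-1}=D_s$, which combined with the congruence already displayed between $\widetilde W(X,\M)$ and $\widetilde W(I,\M_T)$ yields $P^{-\mathsf{H}}\widetilde W(X,\M)P^{-1}=D_s\widetilde W(I,\M_T)D_s$. Applying the nonsingular congruence $P^{-\mathsf{H}}(\,\cdot\,)P^{-1}$ to \eqref{xi} therefore turns it into the equivalent inequality
\[
D_s\,\widetilde W(I,\M_T)\,D_s \;\ge\; \xi\, I_{2n+m},
\]
for which the largest admissible $\xi$ is obviously unique and equals $\lambda_{\min}\!\bigl(D_s\,\widetilde W(I,\M_T)\,D_s\bigr)$, matching the stated value of $\xi^*(X)$.

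To show $\xi^*(X)<1$, I would argue by contradiction and assume $\xi^*(X)\ge 1$, which gives $\widetilde W(I,\M_T)-\diag(I_n,I_n,2I_m)\ge 0$. Since any principal submatrix of a positive semidefinite matrix is positive semidefinite, the leading $2n\times 2n$ block
\[
\left[\begin{array}{cc}0 & A_T\\ A_T^{\mathsf{H}} & 0\end{array}\right]
\]
must be positive semidefinite; but its eigenvalues are $\pm\sigma_i(A_T)$, which forces $A_T=0$, and hence $A=0$. With $A_T=0$, the residual matrix has two zero diagonal blocks, so by the same principle the corresponding rows and columns, and in particular $B_T$ and $C_T$, must vanish, giving $A=B=C=0$. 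This contradicts controllability of $(A,B)$ whenever $n\ge 1$, and hence the assumed minimality of $\M$.

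The only real care point will be the bookkeeping of the three diagonal scalings $P$, $\diag(T,T,I_m)$, and $D_s$: one has to verify that the $\sqrt{2}$ factor placed in the third block of $P$ is exactly the one that converts the extra $2I_m$ in $\diag(X,X,2I_m)$ into the $I_m/\sqrt{2}$ appearing in $D_s$. Once this identity is recorded, both conclusions of the theorem follow immediately.
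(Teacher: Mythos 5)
Your proof is correct and follows essentially the same route as the paper's: factor $X=T^{\mathsf{H}}T$, use the congruence to reduce \eqref{xi} to $D_s\widetilde W(I,\M_T)D_s\ge\xi I_{2n+m}$ so that $\xi^*(X)=\lambda_{\min}(D_s\widetilde W(I,\M_T)D_s)$, and then exclude $\xi^*\ge 1$ by inspecting the diagonal blocks of the positive semidefinite residual, which forces $A_T$, $B_T$, $C_T$ to vanish. Your version is slightly more explicit than the paper's in packaging the scaling as a single congruence by $P=\diag(T,T,\sqrt{2}I_m)$ and in spelling out that $A=B=C=0$ contradicts minimality, but the substance is identical.
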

\begin{proof}
It follows from \eqref{Xbounds} that every $X\in\XWpd$ is positive definite. Therefore it can be factorized as $X=T^{\mathsf{H}}T$ with $\det T\neq 0$, and we can consider the normalized system $\M_T=\{TAT^{-1},TB,CT^{-1},D\}$. It is easy to see that the condition \eqref{xi} is equivalent to the corresponding LMI condition for the transformed system $\M_T$, which is given by
\[
\widetilde W(I,\M_T)  \ge \xi \diag(I_n,I_n,2I_m).
\]
The largest value $\xi^*(X)$ of $\xi$ for which this holds is clearly equal to
\begin{equation} \xi^*(X)  =  \max_\xi\left[ \;\xi\; | \;  D_s\widetilde W(I,\M_T)D_s \ge \xi  I_{2n+m}\right] = \lambda_{\min} (D_s\widetilde W(I,\M_T)D_s).
\end{equation}
Since $D_s\widetilde W(I,\M_T)D_s-\xi^*I_{2n+m}$ is positive semi-definite, its diagonal must be non-negative, and thefore $\xi^*$ can not be larger than 1.
Moreover, $\xi^*=1$ would imply then that $A_T$, $B_T$ and $C_T$ would be zero.
\end{proof}

\begin{remark}\label{rem:eq}{\rm
Note that $\widetilde W(I,\M_T)=\widehat W(I,\M_T)$. From \eqref{rhobound} one then obtains the inequality
\[ \lambda_{\min} (D_s\widetilde W(I,\M_T)D_s) \le \rho_{\M_T}
\]
which shows the relevance of $\widetilde W(I,\M_T)$ in the maximization of the passivity radius.
}
\end{remark}

The use of the characterization $\xi^*(X) := \lambda_{\min} D_s\widetilde W(I,\M_T)D_s$ in terms of the LMI \eqref{xi} is crucial for the rest of this section.
We also point out that Theorem~\ref{thm:maxoverX} applies to all points of $\XWpd$, and therefore also of  $\XWpdpd$. But we can distinguish between both.
\begin{corollary} \label{distinction}
The maximal value $\xi^*(X)$ of a matrix $X\in \XWpd$ for a given model $\M$ equals 0 if $X$ is a boundary point of $\XWpd$ and is strictly positive if and only if $X$ is in $\XWpdpd$.
\end{corollary}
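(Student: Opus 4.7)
The plan is to reduce the claim about $\xi^*(X)$ to the defining inequalities of $\XWpd$ and $\XWpdpd$ by chaining together three congruences already established in this section. Since congruence preserves inertia, the positivity of the minimum eigenvalue in the characterization $\xi^*(X)=\lambda_{\min}(D_s\widetilde W(I,\M_T)D_s)$ from Theorem~\ref{thm:maxoverX} should translate directly into the positive (semi)definiteness of $W(X,\M)$.

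First I would note that $D_s=\diag(I_n,I_n,I_m/\sqrt{2})$ is nonsingular, so by Sylvester's law of inertia $D_s\widetilde W(I,\M_T)D_s$ and $\widetilde W(I,\M_T)$ share the same inertia; in particular, one is positive definite (respectively positive semidefinite) iff the other is. Next, for $X\in\XWpd$ I would factor $X=T^{\mathsf{H}}T$ with $T$ invertible and invoke the congruence $\widetilde W(I,\M_T)=\diag(T^{-\mathsf{H}},T^{-\mathsf{H}},I_m)\,\widetilde W(X,\M)\,\diag(T^{-1},T^{-1},I_m)$ displayed just before \eqref{xi}, yielding identical inertia for $\widetilde W(I,\M_T)$ and $\widetilde W(X,\M)$. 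Finally, as noted at the opening of Section~\ref{sec:maxpass}, $\widetilde W(X,\M)$ is congruent to $\diag(X,W(X,\M))$; since $X>0$ on $\XWpd$, this means that $\widetilde W(X,\M)$ is positive definite (resp.\ positive semidefinite) exactly when $W(X,\M)$ is.

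Stringing the chain together, every $X\in\XWpd$ yields $\widetilde W(X,\M)\ge 0$ and hence $\xi^*(X)\ge 0$; moreover $\xi^*(X)>0$ iff $W(X,\M)>0$ iff $X\in\XWpdpd$, which proves the ``strictly positive iff'' assertion. The first assertion follows by elimination: a boundary point $X\in\XWpd\setminus\XWpdpd$ satisfies $\xi^*(X)\ge 0$ but is excluded from the strict case, so $\xi^*(X)=0$. The main obstacle here is essentially notational rather than mathematical; the only subtlety is to read ``boundary point of $\XWpd$'' as an element of $\XWpd\setminus\XWpdpd$, consistent with the defining inequalities, even in the degenerate regime of Remark~\ref{rem:rm1} where $\XWpd$ may have empty topological interior.
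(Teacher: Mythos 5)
Your argument is correct and follows essentially the same route as the paper: both proofs reduce the sign of $\xi^*(X)$ to the (semi)definiteness of $\widetilde W(X,\M)$, which is congruent to $\diag(X,W(X,\M))$, and hence to whether $X$ lies in $\XWpdpd$. The only cosmetic difference is that you pass through the eigenvalue characterization $\xi^*(X)=\lambda_{\min}(D_s\widetilde W(I,\M_T)D_s)$ and Sylvester's law of inertia, whereas the paper argues directly from the LMI $\widetilde W(X,\M)\ge\xi\,\diag(X,X,2I_m)$; your reading of ``boundary point'' as an element of $\XWpd\setminus\XWpdpd$ matches the paper's intent.
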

\begin{proof}
If $X$ is a boundary point of $\XWpd$ then $\det W(X,\M)=0$ and also $\det \widetilde W(X,\M)=0$ and for those $X$, we thus have $\xi^*(X)=0$.
If $X$ belongs to $\XWpdpd$, then $\widetilde W(X,\M)>0$ and $\diag(X,X,2I_m)>0$.
Therefore there exists an $\xi>0$ such that $\widetilde W(X,\M)>\xi\diag(X,X,2I_m)$, and hence $\xi^*(X)>0$. Conversely, if
$\xi^*(X)>0$ then $\widetilde W(X,\M)>0$ and $W(X,\M)>0$ which implies that $X\in \XWpdpd$.
\end{proof}

In order to maximize $\xi^*(X)$, we consider for a given $X$ in  $\XWpd$ the matrix
\[
\widetilde W(X,\M_\xi):= \left[\begin{array}{ccc} X  &  XA_\xi & XB_\xi \\
A_\xi^{\mathsf{H}}X & X & C_\xi^{\mathsf{H}} \\   B_\xi^{\mathsf{H}}X & C_\xi & D_\xi^{\mathsf{H}}+D_\xi \end{array}\right]
\]
corresponding to the modified model
$\M_\xi :=\{A_\xi,B_\xi,C_\xi,D_\xi\}:=\{\frac{A}{(1-\xi)},\frac{B}{(1-\xi)},\frac{C}{(1-\xi)},\frac{D-\xi I_m}{(1-\xi)}\}$.
It turns out that this matrix satisfies the identity
\begin{equation} \label{shifted}
(1-\xi)\widetilde W(X,\M_\xi) = \widetilde W(X,\M) - \xi \left[\begin{array}{ccc} X  & 0 & 0 \\ 0 & X & 0 \\ 0 & 0 & 2I_m \end{array}\right]
\end{equation}
which is crucial for the following Lemma.
\begin{lemma} \label{inclusion}
For every $X> 0$ in $\XWpdpd$ and any $0\le \xi_- < \xi_+  \le \xi^*(X)$, the passivity LMIs for the systems $\M_{\xi_-}$ and $\M_{\xi_+}$ are satisfied. Moreover, the solution set of
$\widetilde W(X,\M_{\xi_+})\ge 0$ is included in the solution set of  $\widetilde W(X,\M_{\xi_-})> 0$.
\end{lemma}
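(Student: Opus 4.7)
The proof plan is to reduce everything to the identity \eqref{shifted} together with the maximality characterization coming from Theorem~\ref{thm:maxoverX}, namely that $\widetilde W(X,\M)\ge\xi\diag(X,X,2I_m)$ holds for every $\xi\in[0,\xi^{\ast}(X)]$, and that $\xi^{\ast}(X)<1$ so that the scalar $1-\xi$ stays strictly positive and can be cleared without reversing inequalities.

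For the first assertion, I would fix the hypothesized $X\in\XWpdpd$ and apply \eqref{shifted} with $\xi=\xi_+$. The right-hand side $\widetilde W(X,\M)-\xi_+\diag(X,X,2I_m)$ is positive semidefinite because $\xi_+\le\xi^{\ast}(X)$, so dividing by $1-\xi_+>0$ yields $\widetilde W(X,\M_{\xi_+})\ge 0$. The identical argument with $\xi_-$ (which also lies in $[0,\xi^{\ast}(X)]$) gives $\widetilde W(X,\M_{\xi_-})\ge 0$. Thus the original $X$ is a common feasible point of both passivity LMIs, certifying that $\M_{\xi_-}$ and $\M_{\xi_+}$ are passive.

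For the inclusion of solution sets, let $Y$ be any element of the solution set of $\widetilde W(Y,\M_{\xi_+})\ge 0$. Membership in $\XWpd$ forces $Y>0$ (the $(1,1)$ block of $\widetilde W(Y,\cdot)$ is $Y$ itself and the defining condition for $\XWpd$ includes $Y>0$). Rewriting \eqref{shifted} with argument $Y$, the hypothesis becomes
\[
\widetilde W(Y,\M)\ge\xi_+\diag(Y,Y,2I_m).
\]
Adding the strictly positive matrix $(\xi_+-\xi_-)\diag(Y,Y,2I_m)$ to both sides upgrades this to the strict inequality $\widetilde W(Y,\M)>\xi_-\diag(Y,Y,2I_m)$. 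Applying \eqref{shifted} once more with $\xi=\xi_-$, and dividing by $1-\xi_->0$, produces $\widetilde W(Y,\M_{\xi_-})>0$, placing $Y$ in the strict solution set as claimed.

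The main difficulty I foresee is largely notational: the lemma reuses the symbol $X$ both for the fixed matrix from the hypothesis and for the generic unknown appearing in the solution-set statement, so one must be careful to invoke \eqref{shifted} with the appropriate argument in each step. The only substantive point is guaranteeing $Y>0$ when upgrading from $\ge$ to $>$, and this comes for free from the convention $\XWpd\subset\{Y>0\}$. Everything else reduces to routine rearrangements of \eqref{shifted}.
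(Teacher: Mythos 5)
Your proposal is correct and follows essentially the same route as the paper: both rest on the affine identity \eqref{shifted} (equivalently the two-parameter version $(1-\xi_2)\widetilde W(X,\M_{\xi_2})=(1-\xi_1)\widetilde W(X,\M_{\xi_1})-(\xi_2-\xi_1)\diag(X,X,2I_m)$), the positivity of $\diag(X,X,2I_m)$, and the bound $\xi^*(X)<1$ from Theorem~\ref{thm:maxoverX} to keep $1-\xi>0$. Your handling of the strict inequality for $\xi_-$ and of $Y>0$ matches the paper's argument in substance.
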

\begin{proof}
The LMIs for two different values of $\xi$ are related as
\[
 (1-\xi_2)\widetilde W(X,\M_{\xi_2}) = (1-\xi_1)\widetilde W(X,\M_{\xi_1}) - (\xi_2-\xi_1) \diag(X,X,2I_m).
\]
Since $X\in\XWpdpd$, we have that $\xi^*(X)>0$ and $\diag(X,X,2I_m)>0$. For that $X$, it then follows that
\begin{equation} \label{ineqs}
\widetilde W(X,\M) \ge (1-\xi_-)\widetilde W(X,\M_{\xi_-}) > (1-\xi_+)\widetilde W(X,\M_{\xi_+}) \ge (1-\xi^*(X))\widetilde W(X,\M_{\xi^*(x)}) \ge 0.
\end{equation}
The systems  $\M_{\xi_-}$ and $\M_{\xi_+}$ are thus passive, since their associated LMIs have a nonempty solution set. Now consider {\em any} $X$ for which $\widetilde W(X,\M_{\xi_+})\ge 0$. Since $\xi_+$ is strictly positive, so is $\xi^*(X)$ and hence $X\in \XWpdpd$. It then follows from \eqref{ineqs} that $\widetilde W(X,\xi_-)>0$. Hence,
the solution set of $\widetilde W(X,\M_{\xi_+}) \ge 0$ is included in the solution set of $\widetilde W(X,\M_{\xi_-}) > 0$.
\end{proof}

Lemma~\ref{inclusion} implies that for a given $X\in \XWpdpd$, the solution sets of  $\widetilde W(X,\M_\xi) \ge 0$ are shrinking with increasing $\xi$.
But we still need to find the matrix $X\in \XWpd$ that maximizes $\xi^*(X)$. We can answer this question by relating this to the passivity of the transfer function of the modified system $\M_\xi$,
\[
  \T_\xi(z):=C_\xi(zI_n -A_\xi)^{-1}B_\xi+D_\xi,
\]
which is minimal since $\M$ was assumed to be minimal.
It follows from the discussion of Section \ref{sec:prelim} that this transfer function corresponds to a {\em strictly} passive system if and only if the conditions
(i) the transfer function  $\T_\xi(z)$ is asymptotically stable, and
(ii) the matrix function $\Phi_\xi(z):=\T^\mathsf{H}_\xi(z^{-1})+\T_\xi(z)$ is strictly positive on the unit circle $e^{\imath \omega}, \omega\in[-\pi,\pi]$,
    are satisfied.
It has been shown in Section \ref{sec:prelim} that the zeros of $\Phi_\xi(z)$ are the eigenvalues of the symplectic matrix
\begin{equation}
S_\xi:=\matr{cc} I_n &  B_\xi (D_\xi^{\mathsf{H}}+D_\xi)^{-1} B_\xi^{\mathsf{H}} \\
 0 & (A_\xi-B_\xi (D_\xi^{\mathsf{H}}+D_\xi)^{-1} C_\xi)^{\mathsf{H}} \rix^{-1} \matr{cc} A_\xi-B_\xi (D_\xi^{\mathsf{H}}+D_\xi)^{-1} C_\xi & 0 \\
C_\xi^{\mathsf{H}} (D_\xi^{\mathsf{H}}+D_\xi)^{-1} C_\xi & I_n \rix,
\end{equation}
which are also the finite eigenvalues of the pencil
\begin{equation*}
z  \left[ \begin{array}{ccc}  0 & -I_n & 0 \\ A_\xi^\mathsf{H} & 0 & 0 \\
B_\xi^{\mathsf{H}} & 0 & 0 \end{array}
\right] + \left[ \begin{array}{ccc} 0 & A_\xi & B_\xi \\ -I_n & 0 & C^{\mathsf{H}}_\xi \\
0 & C_\xi & D^{\mathsf{H}}_\xi+D_\xi \end{array}\right]
\end{equation*}
or equivalently, those of the pencil
\begin{equation} \label{xipencil}
z  \left[ \begin{array}{ccc}  0 & (\xi-1) I_n & 0 \\ A^\mathsf{H} & 0 & 0 \\
B^{\mathsf{H}} & 0 & 0 \end{array}
\right] + \left[ \begin{array}{ccc} 0 & A & B \\ (\xi-1)I_n & 0 & C^{\mathsf{H}} \\
0 & C & D^{\mathsf{H}}+D - 2\xi I_m \end{array}\right]
\end{equation}
and that the realization of $\M_\xi$ is minimal.
The algebraic conditions corresponding to strict passivity of $\T_\xi(z)$ are therefore
\begin{enumerate}
	\item[A1.] $A_\xi $ has all its eigenvalues inside the unit disc (stability),
	\item[A2.] the pencil \eqref{xipencil} has no eigenvalues on the unit circle  (positive realness).
\end{enumerate}
These conditions are phrased in terms of eigenvalues of certain matrices that depend on the parameter $\xi$. Since eigenvalues are continuous functions of the matrix elements, one can consider limiting cases for the above conditions. As explained in Section \ref{sec:prelim} the passive transfer functions are limiting cases of strictly passive ones. Those limiting cases correspond to the value of $\xi$ where one of the conditions A1. or A2. does not hold anymore.
\begin{theorem} \label{thm:Xi} Let $\M$ be a strictly passive and minimal system. Then there is a bounded supremum $\Xi:=\sup_\xi \{\xi \; | \; \T_\xi(z) \mathrm{\; is \; strictly \; passive}\}$ for which the following properties hold
\begin{enumerate}
\item  $\T_\Xi(z)$ is passive,
\item the solution set of $\widetilde W(X,\M_\Xi)\ge 0$ is not empty,
\item the solution set of $\widetilde W(X,\M_\Xi)> 0$ is empty,
\item for any $\xi < \Xi$ the solution set of $\widetilde W(X,\M_\xi)> 0$ is non-empty,
\item  $\Xi:=\sup_X \xi^*(X)$ for all $X\in \XWpd$.
\end{enumerate}
\end{theorem}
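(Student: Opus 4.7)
The plan is to leverage the affine-in-$\xi$ identity \eqref{shifted} as the main engine and use continuity of eigenvalues when passing to limits in $\xi$. First, $\Xi$ is well defined and finite: strict passivity of $\M$ places $0$ in the set $\{\xi : \T_\xi \text{ strictly passive}\}$, while the $(3,3)$ block of any $\widetilde W(X,\M_\xi)\ge 0$ forces $D_\xi^{\mathsf{H}}+D_\xi=(D^{\mathsf{H}}+D-2\xi I_m)/(1-\xi)\ge 0$, so $\xi\le \lambda_{\min}(D^{\mathsf{H}}+D)/2<\infty$ for any passive $\M_\xi$.

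For property~4, I fix $\xi<\Xi$ and apply the supremum property to obtain $\xi'\in(\xi,\Xi]$ with $\T_{\xi'}$ strictly passive; the KYP characterization recalled in Section~\ref{sec:prelim} then produces some $X\in\XWpdpd$ for $\M_{\xi'}$, i.e.\ $\widetilde W(X,\M_{\xi'})>0$. Subtracting the two instances of \eqref{shifted} at $\xi$ and $\xi'$ gives
\[
(1-\xi)\widetilde W(X,\M_\xi) = (1-\xi')\widetilde W(X,\M_{\xi'}) + (\xi'-\xi)\diag(X,X,2I_m) > 0,
\]
so $\widetilde W(X,\M_\xi)>0$ with the same $X$; this is essentially Lemma~\ref{inclusion} re-used.

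Properties~1--3 follow by continuity. Along any sequence $\xi_k\nearrow\Xi$ the eigenvalues of $A_{\xi_k}$ lie in the open unit disc and the finite eigenvalues of the pencil \eqref{xipencil} avoid the unit circle; passing to the limit deposits them in the closed unit disc and off the open exterior of it, so conditions A1--A2 hold in their weak form and $\T_\Xi$ is passive, giving property~1. Since the scaling $\M\mapsto\M_\xi$ preserves minimality, the KYP theorem then delivers a positive definite solution to the LMI for $\M_\Xi$, proving property~2. Property~3 is by contradiction: if $\widetilde W(X,\M_\Xi)>0$ for some $X$, then by \eqref{shifted}
\[
(1-\xi)\widetilde W(X,\M_\xi) = (1-\Xi)\widetilde W(X,\M_\Xi) - (\xi-\Xi)\diag(X,X,2I_m)
\]
stays positive definite for $\xi$ slightly larger than $\Xi$, forcing strict passivity of $\M_\xi$ and contradicting the maximality of $\Xi$.

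Finally, property~5 packages the previous steps. From \eqref{shifted} we read off $\xi^*(X)=\sup\{\xi : \widetilde W(X,\M_\xi)\ge 0\}$, hence $\sup_X\xi^*(X)$ equals the supremum of $\xi$ for which $\M_\xi$ is passive. Property~1 puts $\Xi$ in that set, and any $\xi'>\Xi$ with $\M_{\xi'}$ passive would, by the same identity argument as in property~4, make every $\M_\xi$ with $\xi\in(\Xi,\xi')$ strictly passive, contradicting the definition of $\Xi$. The main obstacle I anticipate is the clean execution of the continuity argument for property~1: one must rule out a pencil eigenvalue escaping to infinity or colliding with the unit circle at $\xi=\Xi$ in a way incompatible with passivity, which follows from the fact that the pencil has fixed size and that minimality (hence regularity of the pencil) is preserved by the scaling, but deserves explicit mention.
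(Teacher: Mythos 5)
Your proof is correct and follows essentially the same route as the paper: the affine identity \eqref{shifted} together with Lemma~\ref{inclusion} drives properties 3--5, continuity of the conditions A1--A2 gives property 1, and the KYP characterization gives property 2. The only (harmless) deviations are cosmetic: you bound $\Xi$ via the $(3,3)$ block of the LMI rather than by citing Theorem~\ref{thm:maxoverX}, and you prove property~4 from a strictly passive $\xi'$ near $\Xi$ instead of reusing an $X$ from the solution set of $\widetilde W(X,\M_\Xi)\ge 0$; both versions share with the paper the implicit restriction $\xi<1$ needed for the sign of $1-\xi$.
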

\begin{proof}
	The existence of a bounded supremum follows from the fact that $\T_\xi(z)$ is strictly passive
	only if $\xi$ is smaller than 1 (see Theorem \ref{thm:maxoverX}). Property 1. holds because
	$\T_\Xi(z)$ is the limit of $\T_\xi(z)$ for $\xi \rightarrow \Xi$. Property 2. is a direct consequence of the previous property. Property 3. follows by contradiction~: if $\widetilde W(X,\M_\Xi)> 0$ would not be empty, then $\xi^*(X)$ for $X$ in the domain of $\widetilde W(X,\M_\Xi)> 0$, would be larger that $\Xi$. Property 4. follows from Lemma \ref{inclusion} where we use any $X$ in the domain of $\widetilde W(X,\M_\Xi)\ge  0$ and choose $\xi_+=(\Xi+\xi)/2$ and $\xi_-=\xi$ to show that $X$ also lies in the domain of $\widetilde W(X,\M_\xi)> 0$. Property 5. follows from
	$\xi^*(X)=\max \{ \xi \; | \; \widetilde W(X,\M_\xi)\ge 0 \}$, which expresses that $\T_\xi(z)$ is passive.
\end{proof}

The following theorem discusses the optimal passivity radius over all realizations of $\T(z)$.
\begin{theorem} \label{thm:optimal}
	Let $\M:=\{A,B,C,D\}$ be a minimal realization of a strictly passive transfer function $\T(z):= C(zI-A)^{-1}B+D$. 	Then
\[
\Xi:=\sup_\xi \{\xi \; | \; \T_\xi(z) \mathrm{\; is \; strictly \; passive}\}
\]
 is a lower bound for the largest possible passivity radius within the set of all realizations of $\T(z)$.
 Moreover, normalized realizations $\M_T:=\{T^{-1}AT,BT,T^{-1}C,D\}$, where $X:=T^\mathsf{H}T$ corresponds to a solution $X$ of $\widetilde W(X,\M_\Xi)\ge 0$,
 have a passivity radius  $\rho_{\M_T}$ larger than or equal to $\Xi$.
\end{theorem}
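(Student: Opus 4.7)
The plan is to combine three ingredients already proved in the paper: (i) Theorem~\ref{thm:Xi} produces a matrix $X \in \XWpd$ realizing the value $\Xi$ via the LMI $\widetilde W(X,\M_\Xi)\geq 0$; (ii) Theorem~\ref{thm:maxoverX} reinterprets $\xi^*(X)$ as the smallest eigenvalue of the scaled matrix $D_s\widetilde W(I,\M_T)D_s$ associated with a normalized realization; (iii) Remark~\ref{rem:eq} bounds the passivity radius of that normalized realization from below by precisely this same smallest eigenvalue.

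Concretely I would proceed as follows. Since $\M$ is strictly passive and minimal, Theorem~\ref{thm:Xi} applies and gives $\Xi>0$. By property 2 of that theorem pick any $X$ with $\widetilde W(X,\M_\Xi)\ge 0$; property 5 combined with Corollary~\ref{distinction} then forces $\xi^*(X)=\Xi>0$, hence $X\in \XWpdpd$, so in particular $X>0$ and admits a Cholesky-type factorization $X=T^{\mathsf{H}}T$ with $\det T\neq 0$. Form the normalized realization $\M_T:=\{TAT^{-1},TB,CT^{-1},D\}$. Theorem~\ref{thm:maxoverX} gives
$$\xi^*(X)=\lambda_{\min}(D_s\widetilde W(I,\M_T)D_s),$$
and Remark~\ref{rem:eq} (using the identity $\widetilde W(I,\M_T)=\widehat W(I,\M_T)$) yields
$$\lambda_{\min}(D_s\widetilde W(I,\M_T)D_s)\le \rho_{\M_T}.$$
Chaining the two gives $\rho_{\M_T}\ge \xi^*(X)=\Xi$, which is the second assertion.

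The first assertion then follows at once: $T$ is invertible so $\M_T$ has the same transfer function $\T(z)$ as $\M$, and hence is itself a realization of $\T(z)$. Therefore the supremum of the passivity radius taken over the set of all realizations of $\T(z)$ is bounded below by $\rho_{\M_T}\ge \Xi$, as claimed.

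The main obstacle is conceptual bookkeeping rather than a deep computation: one must verify that the $X$ produced by Theorem~\ref{thm:Xi} really lies in the open set $\XWpdpd$ (so that the normalization $X=T^{\mathsf{H}}T$ is available and Theorem~\ref{thm:maxoverX} applies), and one must align the conventions for $\widetilde W$, $\widehat W$, and the scaling $D_s$ used in Sections~\ref{sec:passrad} and~\ref{sec:maxpass}; Remark~\ref{rem:eq} is exactly what bridges these two formulations. Once this alignment is made explicit, the proof is essentially a concatenation of previously established inequalities.
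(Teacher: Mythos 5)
Your proposal is correct and follows essentially the same route as the paper's own proof: both chain Theorem~\ref{thm:Xi} (to get an $X$ achieving $\xi^*(X)=\Xi$), Theorem~\ref{thm:maxoverX} (to identify $\xi^*(X)$ with $\lambda_{\min}(D_s\widetilde W(I,\M_T)D_s)$), and Remark~\ref{rem:eq} together with Lemma~\ref{lem:passrad} (to bound $\rho_{\M_T}$ from below by that eigenvalue). Your explicit verification that the chosen $X$ lies in $\XWpdpd$ via Corollary~\ref{distinction} is a useful piece of bookkeeping that the paper leaves implicit, but it does not change the argument.
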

\begin{proof}
Consider realizations  $\M_T:=\{T^{-1}AT,BT,T^{-1}C,D\}$ with $X:=T^\mathsf{H}T$ and $X \in \widetilde W(X,\M)\ge 0$.
It was shown in Theorem~\ref{thm:maxoverX} that for the corresponding realization $\M_T$, we have that $\xi^*(X)=\lambda_{\min}(D_s\widetilde W(I,\M_T)D_s)$.
Theorem~\ref{thm:Xi} then shows that for a solution $X$ of $\widetilde W(X,\M_\Xi)\ge 0$ corresponding to the supremum of all $\xi^*(X)$, we have
$\Xi=\lambda_{\min}(D_s\widetilde W(I,\M_T)D_s)$. The lower bound $\Xi\le \rho_{\M_T}$ then follows from Remark \ref{rem:eq} and Lemma \ref{lem:passrad}.
\end{proof}

\begin{figure}[ht]
	\centering
	\includegraphics[width=10cm]{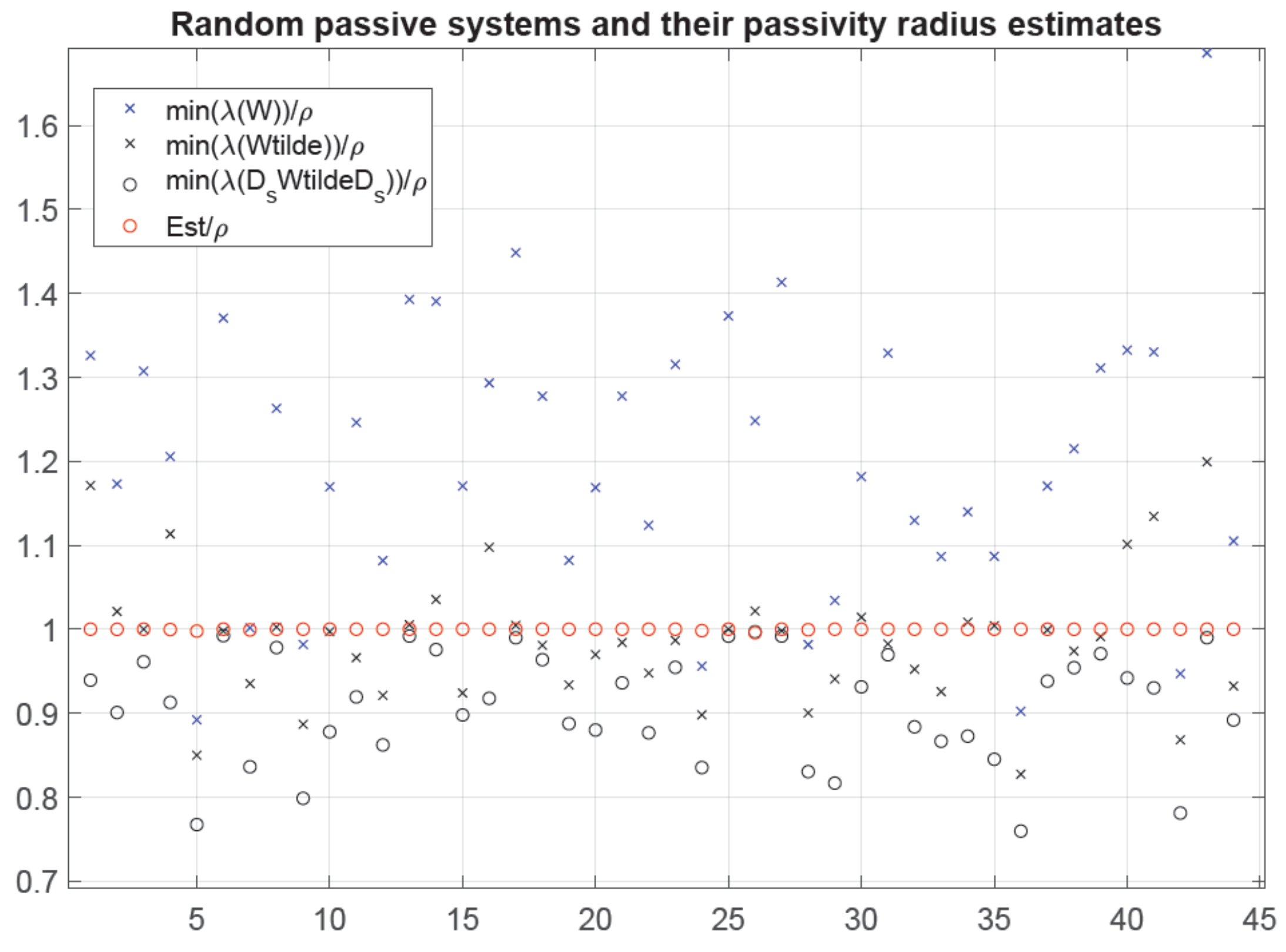}
	\caption{Relative accuracies of four estimates of the passivity radius of a random system~: $\lambda_{\min}W(I,\M_T)$, $\lambda_{\min}\widetilde W(I,\M_T)$, $\lambda_{\min}D_s\widetilde W(I,\M_T)D_s$,
	and $Est=\| \left[ \gamma^{gm} N_1\;|\; N_2/\gamma^{gm} \right] \|^2_2$}
	\label{fig:random}
\end{figure}

In Figure \ref{fig:random}, we generated random normalized passive systems and computed the following quantities (using $\gamma^{gm}:=\sqrt{\beta/\alpha}$ as defined in Appendix B):
\begin{enumerate}
	\item The passivity radius $\rho_{\M_T}$, computed to 4 digits of accuracy,
	\item $\lambda_{\min}W(I,\M_T)$.
	\item $\lambda_{\min}\widetilde W(I,\M_T)$,
	\item $\lambda_{\min}D_s\widetilde W(I,\M_T)D_s$ which is a lower bound for $\rho_{\M_T}$,
	\item $Est:=\| \left[ \gamma^{gm} N_1\;|\; N_2/\gamma^{gm} \right] \|^2_2$ which is also a lower bound for $\rho_{\M_T}$.
\end{enumerate}

In Figure~\ref{fig:random} we depict the quantities (2.-5.) divided by $\rho_{\M_T}$ to indicate their relative
bounds. It can be seen that the eigenvalues are in the interval
\[
 \frac12 \rho_{\M_T} \le \lambda_{\min}W(I,\M_T), \lambda_{\min}\widetilde W(I,\M_T) \le 2\rho_{\M_T}$$
and that $$ \frac12 \rho_{\M_T} \le \lambda_{\min}D_s\widetilde W(I,\M_T)D_s \le \rho_{\M_T},
\quad  \| \left[ \gamma^{gm} N_1\;|\; N_2/\gamma^{gm} \right] \|^2_2 \approx \rho_{\M_T}.
\]
Figure~\ref{fig:random} indicates that $1/g(\gamma^{gm})\le \rho_{\M}(X)$ is a very good estimate of the passivity radius (within $1\%$ of the correct value) and that the bound $\lambda_{\min}(D_s\widetilde WD_s)\le \rho_{\M}(X)$ holds.

\section{A scalar example}

In this section we analyze a simple first order discrete-time scalar system. Its transfer function $T(z)=d+\frac{cb}{z-a}$ is asymptotically stable if $a^2<1$. Then
\[
W(x) =
\left[\begin{array}{cc}
x- a^2x & c -abx\\
c - abx & 2d - b^2x
\end{array}\right]
\]
and the roots $x_-, x_+$ of the quadratic polynomial $\det W(x) = (1-a^2)x(2d-b^2x)-(c-abx)^2$
happen to be the extremal solutions of the associated Riccati equations. The set $\XWpd$ where $W(x)\ge 0$ is thus just the interval $[x_-,x_+]$, provided these two roots are real. This polynomial can be rewritten as
\[
 \det W(x)=-b^2x^2+2\beta x - c^2, \quad \mathrm{where} \quad \beta :=(1-a^2)d+abc
\]
and it has two real roots iff $\beta^2\ge (bc)^2$ or $|\beta/(bc)|= |\frac{(1-a^2)d}{bc} + a|\ge 1$.

The normalized passive realizations are those where we normalize $x$ to 1 by the transformation that scales $\{a,b,c,d\}$ to $\{a,b.t,c/t,d\}$, where $x=t^2\in [x_-,x_+]$.
In Figure~\ref{fig:scalar} we show a plot of the passivity radius of the realizations $\M_t:=\{a,b.t,c/t,d\}$ as a function of $t$, and also the following quantities:
\begin{itemize}
	\item The true passivity radius $\rho_{\M_t}:=1/\lambda_{\max}M(\gamma^*)$ defined in Section \ref{sec:passrad},
	\item $\lambda_{\min} W(I,\M_t)$ which is given in Section \ref{sec:prelim},
	\item $\lambda_{\min}D_s\widetilde W(I,\M_t)D_s$ which is a lower bound for $\rho_{\M_t}$,
	\item the values of $b_t:=b.t$ and $c_t:=c/t$.
\end{itemize}

\begin{figure}[ht]
	\centering
	\includegraphics[width=9cm]{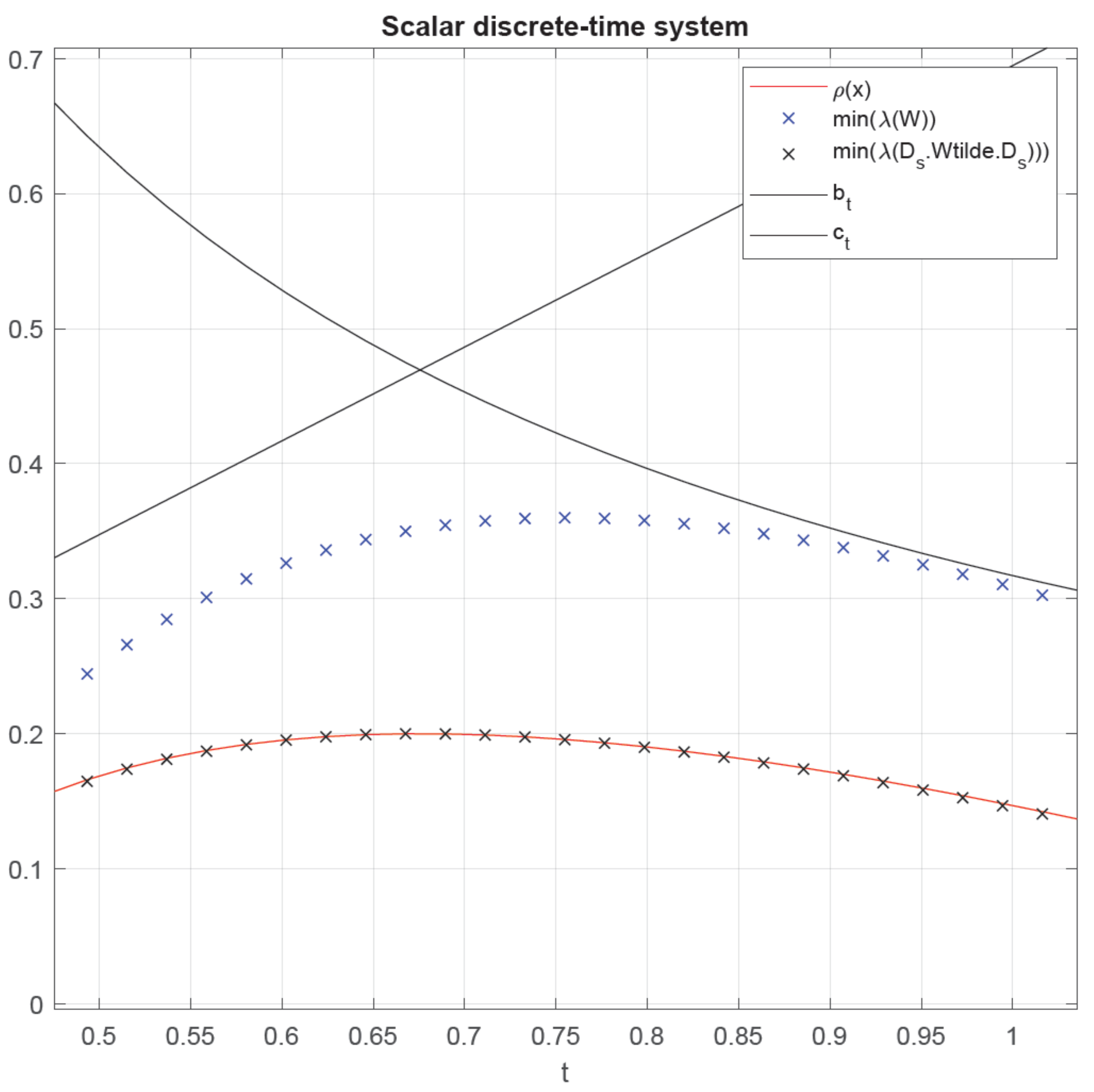}
	\caption{Estimates $\lambda_{\min} W(I,\M_t)$ and $\lambda_{\min}D_s\widetilde W(I,\M_t)D_s$ for the passivity radius $\rho_{\M_t}$ of a scalar normalized system $\M_t:=\{a,b.t,c/t,d\}$ as function of $t$.}
	\label{fig:scalar}
\end{figure}
It is interesting to see that the lower bound $\lambda_{\min}D_s\widetilde W(I,\M_t)D_s$ is almost identical to $\rho_{\M_t}$ for the scalar case and that the optimum is reached when $b_t=c_t$, so that
%
\[
 \widetilde W = \left[\begin{array}{ccc}
1 & a & b_t \\ a & 1 & b_t \\
b_t & b_t & 2d - b_t^2.
\end{array}\right]
\]
We show in Appendix C that in this case no other realization has a better passivity radius. It is also worth pointing out that $\lambda_{\min}W(I,\M_t)$ reaches is optimum value at another value of $t$, but that $\rho_{\M_t}$ is nearly optimal at that point.

\section{Computing the largest value of $\xi^*(X)$} \label{sec:computing}

In this section we describe an algorithm that computes, within a given tolerance
$\tau$, an approximation of the supremum $\Xi$ (see Theorem~\ref{thm:Xi}) of a given minimal realization $\M:=\{A,B,C,D\}$ that is passive.

First of all, if  $\M$ is passive but not strictly passive then $\Xi=0$. If $\M$ is strictly passive, then a
simple upper bound for $\Xi$ follows by the stability bound
\[
\Xi_{up} = 1-\max_j |\lambda_j(A)|.
\]
The procedure to compute $\Xi$ is then to verify for $0 \le \xi \le \Xi_{up}$ the second condition, namely that the pencil \eqref{xipencil} has no unit circle eigenvalues.
The smallest value of $\xi$ in this interval where this condition fails, equals $\Xi$.
(Note that this could be equal to $\Xi_{up}$.) One can then apply a bisection method to this interval and check the presence of unit circle eigenvalues in the given interval.
Setting $\Xi_{lo}=0$, we then have the following procedure.

\medskip
\noindent
{\bf Bisection procedure for computing $\Xi$} \\
$ \xi := (\Xi_{lo} + \Xi_{up})/2, \; \mathrm{\bf if\;} S_\xi \mathrm{\; has \; unit \; circle \; eigenvalues\; \bf then \;} \Xi_{up}:=\xi, \mathrm{\; \bf else\; }  \Xi_{lo}:=\xi.
$

\medskip
Since the interval containing $\Xi$ shrinks by a factor $2$ in each step of this iteration,
in $k=\lceil \log_2(\Xi_{up}/\tau) \rceil$ steps, the interval $[\Xi_{lo},\Xi_{up}]$ will be of length less than or equal to $\tau$.

One can also make use of the computed eigenvalue decompositions to construct an algorithm with faster convergence. For this, we consider the generalized eigenvalue problem
\[
 \Gamma(\xi,\omega):=
\left[ \begin{array}{ccc} 0 & e^{\imath\omega}(\xi-1) I_n+A  & B \\
A^{\mathsf{H}}+ e^{-\imath\omega}(\xi-1)I_n & 0 & C^{\mathsf{H}} \\
B^{\mathsf{H}} & C & D^{\mathsf{H}}+D-\xi I_m \end{array} \right],
\]
which is Hermitian for all real values of $\omega$ and $\xi< 1$.
For a given value of $\hat \xi$ one can check if $\Gamma(\hat \xi,\omega)$ has real eigenvalues $\omega_i$ (they correspond to unit circle eigenvalues of $S_{\hat \xi}$),
and for a given value of $\hat \omega$ one can find the smallest real root $\xi_i$ of $\Gamma(\xi,\hat \omega)$.
These two ideas can be combined in an algorithm for computing $\Xi$ that is very similar to the computation of the $H_\infty$ norm of a transfer function.

We first recall some basic properties of the scalar function $\gamma(\xi,\omega):=\lambda_{\min}\Gamma(\xi,\omega)$, which can be derived from the results
described in \cite{BoyB90} and from the properties of eigenvalues of Hermitian matrices.
\begin{enumerate}
	\item $\gamma(\xi,\omega)$ is a real continuous function of the real variables $\xi$ and $\omega$,
	\item if $\gamma(\hat \xi,\omega) > 0$ for all  $\omega$ then $\hat \xi < \Xi$,
	\item for $\hat \xi < \Xi_{up}$ the real zeros $\omega_k$ of $\gamma(\hat \xi,\omega)$ correspond to a subset of the unit circle eigenvalues $e^{\imath\omega_k}$ of $\Gamma(\hat \xi,\omega)$,
	\item for a given value of $\hat \xi$, $\gamma(\hat \xi,\omega)$ is a quadratic function of $\omega$ in the neighborhood of its local minima,
	\item if $\omega_1 < \omega_2$ are two consecutive zeros of $\gamma(\hat\xi,\omega)$ then at the midpoint $\hat\omega:=(\omega_1+\omega_2)/2$ the smallest real
	root $\widetilde \xi$ of $\Gamma(\xi,\hat\omega)$ lies between 0 and $\hat\xi$ and is an improved upper bound for $\Xi$.
\end{enumerate}

These ideas lead to the following improved algorithm for the computation of $\Xi$.

\medskip

\noindent
{\bf Eigenvalue based  procedure for computing $\Xi$}
\begin{enumerate}
\item $\hat \xi := \Xi_{up}-\tau$;
\item Compute the unit circle eigenvalues $e^{\imath \omega_k}$ of $\Gamma(\hat\xi,\omega)$ and select those corresponding to real zeros $\omega_k$ of $\gamma(\hat \xi,\omega)$;
\item {\bf if} $\gamma(\hat \xi,\omega)$ has no real zeros, {\bf then} $\Xi_{lo}=\hat \xi$, stop;\\
\qquad {\bf else} take the midpoint $\hat\omega:=(\omega_1+\omega_2)/2$ of the largest interval $[\omega_1,\omega_2]$ of these roots \\ \hspace*{.7cm} compute the real roots $\xi_i$ of
$\Gamma(\xi,\hat \omega)$ and update $\Xi_{up} :=\min_i \xi_i$ \\ \hspace*{.7cm} make a guess for $\hat\xi := \Xi_{up}-\tau$ and go to 2.
\end{enumerate}

This algorithm is very similar to the methods proposed in the literature for computing the $H_\infty$ norm of a transfer function (see \eg \cite{BoyB90}) and can therefore
be expected to require only a few iterations to stop with an interval $[\Xi_{lo},\Xi_{up}]$ of size $\tau$.

Note that each step of both algorithms has a complexity that is cubic in the matrix dimensions.
For large scale problems, this complexity becomes a problem, but there are techniques that exploit sparsity to reduce the complexity, see \eg \cite{BeM18,KrV14}.

\section{The distance to passivity} \label{sec:distance}

In this section, we consider the converse problem of computing the smallest perturbation that makes a system passive. Suppose that we are given a minimal system $\M:=\{A,B,C,D\}$ that is not passive. Then we study the problem of computing the smallest perturbation $\Delta_\M$ of the model $\M$ that makes the system
$\M+\Delta_\M$ passive. It is clear that this is equivalent to asking which is the smallest perturbation $\Delta_\M$, measured via the matrix $\Delta_\S$ in \eqref{DeltaS}, such that the LMI $W(X,\M+\Delta_\M) \ge 0$ has a Hermitian and positive semi-definite solution $X$. Moreover $X>0$ if the perturbed system remains minimal.
\begin{definition} The {\em distance to passivity} of a minimal model $\M:=\{A,B,C,D\}$ is the minimum
norm $\|\Delta_\S\|_{2}$ or $\|\Delta_\S\|_{F}$ such that there exists a matrix $X>0$ satisfying
\begin{equation} \label{makepassive}
\widehat W +E\left[\begin{array}{cc} 0 & \Delta_\S \\ \Delta_\S^\mathsf{H} & 0 \end{array}\right]E^\mathsf{T} \ge 0, \;
\mathrm{where} \; \widehat W :=\left[\begin{array}{ccc} X^{-1} &  A & B \\  A^{\mathsf{H}} & X & C^{\mathsf{H}}  \\ B^{\mathsf{H}} &
C & D^{\mathsf{H}} + D \end{array}\right],
\end{equation}
and $E$ is defined in \eqref{defWhatX}.
\end{definition}
Note that (\ref{makepassive}) is an LMI in the parameters of $\Delta_\M$, but it is not linear in $X$.
We will need the following extension of Lemma \ref{inclusion}, for which we consider the LMI for the modified model
$\M_{-\xi} :=\{A_{-\xi},B_{-\xi},C_{-\xi},D_{-\xi}\}:=\{\frac{A}{(1+\xi)},\frac{B}{(1+\xi)},\frac{C}{(1+\xi)},\frac{D+\xi I_m}{(1+\xi)}\}$ with the corresponding transfer function
\begin{equation*} \label{Tminusxi}
T_{-\xi}(z) := C_{-\xi}(zI_n -A_{-\xi})^{-1}B_{-\xi}+D_{-\xi},
\end{equation*}
and corresponding LMI
\begin{equation} \label{shifted2}
\widetilde W(X,\M_{-\xi}) := \left[\begin{array}{ccc} X & XA_{-\xi} & XB_{-\xi} \\
 A_{-\xi}^{\mathsf{H}}X & X & C_{-\xi}^{\mathsf{H}}  \\
B_{-\xi}^{\mathsf{H}}X & C_{-\xi} & D_{-\xi}^{\mathsf{H}}+ D_{-\xi} \end{array}\right]  \ge  0.
\end{equation}
\begin{lemma} \label{inclusion2}
Let $\M:=\{A,B,C,D\}$ be a minimal non-passive system. Then for every $X> 0$ in $\mathbb{H}_n$ there exists a $\xi^*(X)> 0$ such that the LMI \eqref{shifted2} for the system $\M_{-\xi^*(X)}$ holds. Moreover, for every value
$\xi > \xi^*(X)$, the system $\M_{-\xi}$ is passive.
\end{lemma}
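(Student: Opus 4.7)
The plan is to mirror the proof of Lemma~\ref{inclusion}, with the key algebraic step being the derivation of an identity analogous to~\eqref{shifted} but with the opposite sign on the shift. A direct computation using the definitions of $A_{-\xi},B_{-\xi},C_{-\xi},D_{-\xi}$ gives
\[
(1+\xi)\,\widetilde W(X,\M_{-\xi}) = \widetilde W(X,\M) + \xi\,\diag(X,X,2I_m),
\]
which I would establish first as the workhorse identity of the proof.

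Next, for any $X>0$ the right-hand shift matrix $\diag(X,X,2I_m)$ is positive definite, so the pencil $\widetilde W(X,\M) + t\,\diag(X,X,2I_m)$ becomes positive semidefinite for all sufficiently large $t$. Setting $D_X$ to be the positive square root of $\diag(X,X,2I_m)$, I would define
\[
\xi^*(X) := -\lambda_{\min}\bigl(D_X^{-1}\widetilde W(X,\M)D_X^{-1}\bigr),
\]
which is the smallest value of $\xi$ making the shifted matrix PSD; by the identity above, this is exactly the smallest $\xi$ for which $\widetilde W(X,\M_{-\xi})\ge 0$.

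The crucial step is to argue $\xi^*(X)>0$. Since $\M$ is non-passive, the characterization recalled in Section~\ref{sec:prelim} says that no positive definite $X$ can make $W(X,\M)\ge 0$; in particular, for every $X>0$ the matrix $W(X,\M)$ has at least one strictly negative eigenvalue. As $\widetilde W(X,\M)$ is congruent to $\diag(X,W(X,\M))$ and $X>0$ contributes only positive eigenvalues, $\widetilde W(X,\M)$ also carries that negative eigenvalue. Hence $\lambda_{\min}(D_X^{-1}\widetilde W(X,\M)D_X^{-1})<0$ and $\xi^*(X)>0$. This is the only step that uses the non-passivity hypothesis, and it is also the step I expect to be the main (albeit mild) obstacle, since one has to invoke the converse direction of the KYP characterization rather than any direct calculation.

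For the ``moreover'' part, for any $\xi>\xi^*(X)$ I would write
\[
(1+\xi)\widetilde W(X,\M_{-\xi}) = (1+\xi^*(X))\widetilde W(X,\M_{-\xi^*(X)}) + (\xi-\xi^*(X))\diag(X,X,2I_m) > 0,
\]
so $\widetilde W(X,\M_{-\xi})>0$, which by the Schur complement argument from Section~\ref{sec:maxpass} gives $X>0$ and $W(X,\M_{-\xi})>0$. Since scaling of $(A,B,C)$ by $1/(1+\xi)$ preserves controllability and observability, $\M_{-\xi}$ is still minimal, and the LMI characterization of Section~\ref{sec:prelim} then yields that $\M_{-\xi}$ is (strictly) passive, concluding the proof.
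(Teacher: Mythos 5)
Your proposal is correct and follows essentially the same route as the paper: the workhorse identity $(1+\xi)\widetilde W(X,\M_{-\xi})=\widetilde W(X,\M)+\xi\,\diag(X,X,2I_m)$, the definition of $\xi^*(X)$ as the smallest admissible shift, and the convex-combination identity for $\xi>\xi^*(X)$ are exactly the paper's argument. You are in fact somewhat more complete, since you explicitly verify $\xi^*(X)>0$ via non-passivity and the congruence of $\widetilde W(X,\M)$ to $\diag(X,W(X,\M))$, and you check that minimality is preserved — points the paper's proof leaves implicit.
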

\begin{proof}
We have the relation $(1+\xi)\widetilde W(X,\M_{-\xi})=\widetilde W(X,\M) +\xi \diag(X,X,2I_m)$, and since $\widetilde W(X,\M)$ is bounded,
the inequality $\widetilde W(X,\M_{-\xi})\ge 0$ holds for a sufficiently large value of $\xi$. Let $\xi^*(X)$ be the smallest value for which
the passivity condition \eqref{shifted2} holds, then
\[
(1+\xi)\widetilde W(X,\M_{-\xi})=(1+\xi^*(X))\widetilde W(X,\M_{-\xi^*(X)})+(\xi-\xi^*(X)) \diag(X,X,2I_m),
\]
which implies that the passivity condition holds for all $\xi > \xi^*(X)$.
\end{proof}

To determine the distance to passivity, we first restrict ourselves to a perturbation $\Delta_\S$ that has a particular structure.
\begin{theorem} \label{minpass}
The minimum norm perturbation of the type
\begin{equation} \label{restrict}  \S+\Delta_\S= \frac{1}{(1+\xi)} \left( \S + \left[ \begin{array}{cc} 0 & 0 \\ 0 & \xi I_m \end{array} \right]  \right)
\end{equation}
that makes the system $\M$ passive, corresponds to the minimal value of $\xi$ such that the model
$\M_{-\xi}:=\{\frac{A}{(1+\xi)},\frac{B}{(1+\xi)},\frac{C}{(1+\xi)},\frac{D+\xi I_m}{(1+\xi)}\}$
with transfer function $T_{-\xi}(z)$, is passive.
\end{theorem}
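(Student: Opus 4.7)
The plan is to reduce the theorem to a straightforward monotonicity argument once the perturbation is written in a more convenient form. First, I would substitute the prescribed form of $\S+\Delta_\S$ back into the definition of $\Delta_\S$ to obtain the closed-form expression
\[
\Delta_\S \;=\; \frac{\xi}{1+\xi}\left[\begin{array}{cc} -A & -B \\ -C & I_m - D \end{array}\right],
\]
which factorizes the free parameter $\xi$ out of a fixed matrix. In particular,
\[
\|\Delta_\S\| \;=\; \frac{\xi}{1+\xi}\,\left\|\left[\begin{array}{cc} -A & -B \\ -C & I_m - D \end{array}\right]\right\|
\]
in either the $2$-norm or the Frobenius norm. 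Since the scalar $\xi \mapsto \xi/(1+\xi)$ is strictly increasing on $[0,\infty)$, minimizing $\|\Delta_\S\|$ over the admissible values of $\xi$ is equivalent to minimizing $\xi$ itself over the same set.

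Next I would verify that the set
\[
\Xi_{\mathrm{pass}} \;:=\; \{\,\xi \geq 0 \;:\; \M_{-\xi} \text{ is passive}\,\}
\]
is a semi-infinite interval whose infimum is attained (or at least approached in the closed sense). For this, I would apply Lemma~\ref{inclusion2} directly: for any $X>0$ there is a $\xi^*(X)>0$ such that $\widetilde W(X,\M_{-\xi^*(X)})\ge 0$, and moreover $\widetilde W(X,\M_{-\xi})\ge 0$ for all $\xi \ge \xi^*(X)$. Taking the infimum over $X>0$, the set $\Xi_{\mathrm{pass}}$ is non-empty and upward closed, so it has the form $[\xi_{\min},\infty)$ where $\xi_{\min}:=\inf_{X>0} \xi^*(X)$. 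Closedness of $\Xi_{\mathrm{pass}}$ at its infimum follows by the identity
\[
(1+\xi)\widetilde W(X,\M_{-\xi}) \;=\; \widetilde W(X,\M) \,+\, \xi\,\diag(X,X,2I_m),
\]
which is affine in $\xi$ and yields a limit point $X_{\min}$ realizing $\widetilde W(X_{\min},\M_{-\xi_{\min}})\ge 0$ by a standard compactness argument on the bounded LMI solution set (cf.\ the inclusion $\XWpd \subset \mathbb{X}^\pm$).

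Combining the two steps, the minimum norm perturbation of the prescribed form is attained exactly at $\xi=\xi_{\min}$, which is by definition the minimal value of $\xi\ge 0$ such that $\M_{-\xi}$ is passive, establishing the claim. The main obstacle is the closedness of $\Xi_{\mathrm{pass}}$ at its infimum; the monotonicity half (that $\xi_{\min}$ gives the correct minimum norm) is essentially algebraic, but showing that the infimum is achieved relies on the boundedness of the LMI solution set discussed in Section~\ref{sec:prelim} (Remark~\ref{rem:rm1}) together with the affine relation above.
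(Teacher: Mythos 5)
Your proof is correct and follows essentially the same route as the paper's: identify $\S+\Delta_\S$ with the system matrix of $\M_{-\xi}$ and invoke Lemma~\ref{inclusion2} to reduce the problem to the minimal admissible $\xi$. You additionally make explicit the identity $\Delta_\S=\frac{\xi}{1+\xi}\bigl(\diag(0,I_m)-\S\bigr)$ and hence the strict monotonicity of $\|\Delta_\S\|$ in $\xi$ — a step the paper's proof leaves implicit but which is exactly what justifies equating ``minimum norm'' with ``minimal $\xi$'' — as well as the attainment of the infimum via the boundedness of the LMI solution set.
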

\begin{proof}
It follows from \eqref{makepassive} that $\xi$ must satisfy the LMI \eqref{shifted2}
for some $X>0$. By Lemma \ref{inclusion2} there exists a bounded minimal solution, which we call $\Xi$. The model corresponding to $\S+\Delta_\S$ is $\M_{-\xi}$
with transfer function \eqref{Tminusxi}. Therefore $\Xi$ is the
smallest value of $\xi$ that makes the model $\M_{-\xi}$ with transfer function $T_{-\xi}(z)$ become passive. We can then choose $X>0$ from the domain of $\widetilde W(X,\M_{-\Xi})\ge 0$ to satisfy \eqref{shifted2}.
\end{proof}
The minimal value $\Xi$ in Theorem~\ref{minpass} can be computed with the algorithms described in the last section. It thus determines that passivity radius for the constrained class of perturbations \eqref{restrict}.

Since we most likely made some of the eigenvalues of the LMI \eqref{shifted2} strictly positive, rather than nonnegative, we can probably reduce the norm of the perturbation $\Delta_\S$ when removing the constraint \eqref{restrict}. In order to do that, we use a matrix $X$ from the set $\widetilde W(X,\M_{-\Xi})\ge 0$, where $\Xi$ was obtained from the constrained problem. But once $X$ is fixed, condition \eqref{makepassive} beomes an LMI in the unknown perturbation $\Delta_\S$. We can then minimize its 2-norm $\sigma$ by solving the optimization problem
\[
 \min_{\Delta_\S} \sigma, \quad s.t. \quad \left[\begin{array}{cc} \sigma I_{n+m} & \Delta_\S  \\  \Delta_\S^{\mathsf{H}} & \sigma I_{n+m} \end{array}\right] \ge 0, \quad
\widehat W +E\left[\begin{array}{cc} 0 & \Delta_\S \\ \Delta_\S^\mathsf{H} & 0 \end{array}\right]E^\mathsf{T} \ge 0,
\]
or its Frobenius norm $\hat \sigma$ by solving
\[
 \min_{\Delta_\S} \hat \sigma, \quad s.t. \quad \left[\begin{array}{cc} \hat\sigma I_{(n+m)^2} & \mathrm{vec}(\Delta_\S)  \\  \mathrm{vec}(\Delta_\S)^{\mathsf{H}} & \hat\sigma  \end{array}\right] \ge 0, \quad
\widehat W +E\left[\begin{array}{cc} 0 & \Delta_\S \\ \Delta_\S^\mathsf{H} & 0 \end{array}\right]E^\mathsf{T} \ge 0.
\]
Notice that the constrained problem of Theorem \ref{restrict} provided a feasible starting value $\Delta_\S$ for these optimization problems.
We could also use another matrix $X$ that is not in the solution set of $\widetilde W(X,\M_{-\Xi})\ge 0$, but then the norm of the starting point
$\Delta_\S$ constructed from the constrained problem would be larger since  $\xi^*(X)> \Xi$.
\begin{remark}\label{rem4}{\rm
 The same reasoning on how to compute the distance to the nearest passive system can be applied to estimate the distance of a system $x_{k+1}=Ax_k$ that is  unstable to the nearest stable system, see also \cite{GilMS18,GilS17} for the continuous-time case. A result analogous to Theorem~\ref{minpass} would give that a solution of the type
\begin{equation*} A+\Delta_A= A/(1+\xi)
\end{equation*}
%
has a relative error $A^{-1}\Delta_A$ with 2-norm $\frac{\Xi}{(1+\Xi)}$ and Frobenius norm $\frac{\Xi\sqrt{n}}{(1+\Xi)}$,
where $\Xi$ is the minimum value of $\xi$ such that the matrix $A_{-\xi}:=A/(1+\Xi)$ is stable, and this can be used to find an appropriate matrix $X$ for an LMI in $\Delta_A$.
}
\end{remark}

\section{Conclusion}

In this paper we have introduced the notion of normalized passive realizations of a discrete-time system and shown that they share properties with the normalized port-Hamiltonian realizations
of a continuous-time system introduced in \cite{MeV19}. We also showed that the normalized passive realizations typically have a better passivity radius than non-normalized ones.
We have derived methods to maximize a lower bound on the passivity radius and to construct a nearly \emph{optimally robust normalized realization}. The techniques developed in this paper can also
be applied to compute a nearby passive system to a given non-passive one.

\section*{Acknowledgments}
This work was supported by the {\it Deutsche Forschungsgemeinschaft}, through TRR 154 'Mathematical Modelling, Simulation and Optimization using the Example of Gas Networks'. The first author was also supported by {\it the German Federal Ministry of Education and Research BMBF within the project EiFer}.
The second author was also supported  by the Belgian network DYSCO, funded by the Interuniversity Attraction Poles Programme.

\section{Appendix A}
Consider the unimodal optimization problem
\begin{equation}
\label{optgamma}
g(\gamma^*):=\min_{0<\gamma<\infty} g(\gamma), \quad \mathrm{where} \quad g(\gamma):=\| [\;\gamma F_1, \; F_2/\gamma\;]\|_2^2,\quad F_i\in \mathbb{C}^{(2n+m)\times(n+m)}.
\end{equation}
If we define $\alpha:=\|F_1\|_2$ and $\beta:=\|F_2\|_2$ then it was already shown in Theorem \ref{thm:Xpassivity} that $\alpha\beta \le g(\gamma^*) \le 2\alpha\beta$.
We can then derive the following result.
\begin{lemma}
	The infinite search interval for $\gamma$ in the minimization problem \eqref{optgamma} can be replaced by the closed interval $\gamma\in[\gamma_{lo},\gamma_{up}]:=\left [\sqrt{\frac{\beta}{2\alpha}},\sqrt{\frac{2\beta}{\alpha}}\right ]$. Moreover, the function value $g(\gamma^{gm})$ at the geometric mean $\gamma^{gm}:=\sqrt{\frac{\beta}{\alpha}}$ is an upper bound for the minimum.
\end{lemma}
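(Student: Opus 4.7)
The plan is to use the reformulation from Corollary~\ref{cor:lev}, namely $g(\gamma)=\|\gamma^2 F_1F_1^{\mathsf{H}}+\gamma^{-2}F_2F_2^{\mathsf{H}}\|_2$, and to bound $g$ above at the geometric mean and below outside the claimed interval. Both bounds rely on only two elementary facts: the triangle inequality for the spectral norm, and the inequality $\|[M_1,M_2]\|_2^2\geq \max(\|M_1\|_2^2,\|M_2\|_2^2)$, which is immediate from $[M_1,M_2][M_1,M_2]^{\mathsf{H}}=M_1M_1^{\mathsf{H}}+M_2M_2^{\mathsf{H}}\succeq M_iM_i^{\mathsf{H}}$ for $i=1,2$.

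First I would evaluate $g$ at $\gamma^{gm}=\sqrt{\beta/\alpha}$. Since $(\gamma^{gm})^{2}=\beta/\alpha$, the two summands in $(\gamma^{gm})^{2} F_1F_1^{\mathsf{H}}+(\gamma^{gm})^{-2}F_2F_2^{\mathsf{H}}$ have spectral norms exactly equal to $(\beta/\alpha)\alpha^2=\alpha\beta$ and $(\alpha/\beta)\beta^2=\alpha\beta$ respectively. The triangle inequality then yields
\[
g(\gamma^{gm})\;\leq\;\alpha\beta+\alpha\beta\;=\;2\alpha\beta,
\]
and since $g(\gamma^*)\leq g(\gamma^{gm})$ by definition of the minimum, this proves the second claim that $g(\gamma^{gm})$ is an upper bound for the minimum (sharper than the abstract bound $2\alpha\beta$ stated just before the lemma).

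For the interval restriction, I would show that for $\gamma$ outside $[\gamma_{lo},\gamma_{up}]$ one has $g(\gamma)>2\alpha\beta\geq g(\gamma^{gm})\geq g(\gamma^*)$, ruling out such $\gamma$ as minimizers. Concretely, if $\gamma>\gamma_{up}=\sqrt{2\beta/\alpha}$, then
\[
g(\gamma)\;\geq\;\|\gamma F_1\|_2^2\;=\;\gamma^2\alpha^2\;>\;\tfrac{2\beta}{\alpha}\alpha^2\;=\;2\alpha\beta,
\]
and symmetrically, if $\gamma<\gamma_{lo}=\sqrt{\beta/(2\alpha)}$, then
\[
g(\gamma)\;\geq\;\|F_2/\gamma\|_2^2\;=\;\beta^2/\gamma^2\;>\;\beta^2\cdot\tfrac{2\alpha}{\beta}\;=\;2\alpha\beta.
\]
In either case $g(\gamma)>g(\gamma^{gm})$, so such $\gamma$ cannot be optimal, and the minimum must be attained in the compact interval $[\gamma_{lo},\gamma_{up}]$.

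There is no real obstacle here beyond carefully tracking the constants; the unimodality of $g$ (already noted in Theorem~\ref{thm:mingamma}) is not needed for the statement, although it would imply additionally that the restriction is not merely valid but loses nothing in convergence of a one-dimensional search. The geometric-mean value $\gamma^{gm}$ is the natural choice because it exactly balances the two norm contributions, giving the sharpest elementary upper bound reachable from this triangle-inequality argument.
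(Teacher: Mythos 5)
Your proof is correct and follows essentially the same route as the paper's: the paper also argues that $g(\gamma)>2\alpha\beta$ outside $[\gamma_{lo},\gamma_{up}]$ while $g(\gamma^*)\le 2\alpha\beta$, and notes that any function value in the interval bounds the minimum from above. You merely make explicit the elementary estimates ($\|[\gamma F_1,\,F_2/\gamma]\|_2^2\ge\max(\gamma^2\alpha^2,\beta^2/\gamma^2)$ and the triangle inequality at $\gamma^{gm}$) that the paper dismisses as ``easy to see.''
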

\begin{proof}
	It is easy to see that $g(\gamma)>2\alpha\beta$ outside the interval $\gamma\in[\gamma_{lo},\gamma_{up}]$ and, since $g(\gamma^*)\le 2\alpha\beta$, the minimum must lie in the interval $\gamma\in[\gamma_{lo},\gamma_{up}]$. Any function value in this interval is of course an upper bound for the minimum.
\end{proof}

\section{Appendix B}

In this appendix we describe another characterization of $\rho_\M(X)$. For this, we consider the identity
\[
 M(Q):=\left[\begin{array}{cc} F_1 & F_2 \end{array}\right]
\left[\begin{array}{cc} 0 & Q \\ Q^\mathsf{H} & 0  \end{array}\right]
\left[\begin{array}{cc} F_1^\mathsf{H} \\ F_2^\mathsf{H}  \end{array}\right] = \left[\begin{array}{cc}\gamma F_1 & F_2/\gamma \end{array}\right]
\left[\begin{array}{cc} 0 & Q \\ Q^\mathsf{H} & 0  \end{array}\right]
\left[\begin{array}{cc}\gamma F_1^\mathsf{H} \\ F_2^\mathsf{H} /\gamma \end{array}\right],
\]
which holds for every real $\gamma >0$ and every nonsingular matrix $Q$.
If we constrain $Q$ to be unitary, \ie $QQ^\mathsf{H}=Q^\mathsf{H}Q=I$, then
it follows that
\[
 h(Q):= \|M(Q)\|_2 \le g(\gamma^*):=\min_{0<\gamma<\infty} g(\gamma), \quad g(\gamma):= \sigma_{\max}^2[\;\gamma F_1, \; F_2/\gamma\;] = \| [\;\gamma F_1, \; F_2/\gamma\;]\|_2^2.
 \]
We now prove that we also have
\begin{equation} \label{duality}
 g(\gamma^*)= h(Q^*) := \max_{QQ^\mathsf{H}=Q^\mathsf{H}Q=I} h(Q) = \max_{QQ^\mathsf{H}=Q^\mathsf{H}Q=I} \|M(Q)\|_2,
\end{equation}
which we prove by constructing a matrix $Q$ so that (\ref{duality}) holds.
It follows from Theorem~\ref{thm:mingamma} that the minimizing right singular vector $z:=\left[\begin{array}{cc} u \\ v  \end{array}\right]$ satisfies
\[ \left[\begin{array}{cc}\gamma F_1 & F_2/\gamma \end{array}\right]
\left[\begin{array}{cc} u \\ v  \end{array}\right] = \sigma_{\max} w,
\quad
\left[\begin{array}{cc}\gamma F_1^\mathsf{H} \\ F_2^\mathsf{H} /\gamma \end{array}\right]w =\sigma_{\max} \left[\begin{array}{cc} u \\ v  \end{array}\right], \quad \|u\|_2=\|v\|_2.
\]
It is then easy to verify then that for a unitary  $Q$ satisfying $Qv=u$ and $Q^\mathsf{H}u=v$, then $M(Q)z=\sigma_{\max}^2z$.

We can now use this construction to show that normalized realizations have a better passivity radius than non-normalized ones. Let $\widehat W=R^\mathsf{H}R$ be the Cholesky factorization of an arbitrary model $\M$.
The Cholesky factorization of the corresponding matrix
\[
\widehat W_n= \diag(T^{-1},T^\mathsf{H},I_m)\widehat W \diag(T^{-\mathsf{H}},T,I_m)
\]
of the normalized model $\M_T:=\{T^{-1}AT,T^{-1}B,CT,D\}$ is then given by
\[
\widehat W_n:=R_n^\mathsf{H}R_n = \diag(T^{-1},T^\mathsf{H},I_m) R^\mathsf{H}R \diag(T^{-\mathsf{H}},T,I_m)
\]
 and the relation $R^{-\mathsf{H}}=R_n^{-\mathsf{H}}
\diag(T^{-1},T^{\mathsf{H}},I_m)$ then yields
\begin{eqnarray*}
F_1&:=&R^{-\mathsf{H}}E_1=R_n^{-\mathsf{H}}E_1\diag(T^{-1},I_m)= N_1\diag(T^{-1},I_m),\\
F_2&:=&R^{-\mathsf{H}}E_2=R_n^{-\mathsf{H}}E_2\diag(T^{\mathsf{H}},I_m)=N_2\diag(T^{\mathsf{H}},I_m).
\end{eqnarray*}
It then follows from \eqref{duality} that
\begin{eqnarray*}
 \rho^{-1}_\M(X) & = & \min_{0<\gamma<\infty} \|\left[\begin{array}{cc}\gamma F_1 & F_2/\gamma \end{array}\right]
\left[\begin{array}{cc}\gamma F_1^\mathsf{H} \\ F_2^\mathsf{H} /\gamma \end{array}\right]\|_2 \\
& \ge &
\| \left[\begin{array}{cc}\gamma F_1 & F_2/\gamma \end{array}\right]
\left[\begin{array}{cc} 0 & I_n \\ I_n & 0  \end{array}\right]
\left[\begin{array}{cc}\gamma F_1^\mathsf{H} \\ F_2^\mathsf{H} /\gamma \end{array}\right]\|_2 \\
 & = & \| \left[\begin{array}{cc} N_1 & N_2 \end{array}\right]
 \left[\begin{array}{cc} 0 & I_n \\ I_n & 0  \end{array}\right]
 \left[\begin{array}{cc} N_1^\mathsf{H} \\ N_2^\mathsf{H} \end{array}\right]\|_2 \\
 & = & \|N_1N_2^\mathsf{H}+N_2N_1^\mathsf{H}\|_2.
\end{eqnarray*}
Note that
\[
h(I)=\|N_1N_2^\mathsf{H}+N_2N_1^\mathsf{H}\|_2 = \|  R^{-\mathsf{H}}\left[\begin{array}{ccc} 0 & I_n & 0 \\ I_n & 0 & 0 \\ 0 & 0 & 2I_m \end{array}\right] R^{-1} \|_2 \le 2\|N_1\|_2\|N_2\|_2
\]
 but we need a lower bound for $\rho^{-1}_{\M_T}(X)$.
If $\widehat W_n$ commutes with $J:=\left[\begin{array}{ccc} 0 & I_n & 0 \\ I_n & 0 & 0 \\ 0 & 0 & I_m \end{array}\right]$, which implies that $\left[\begin{array}{cc} A_T & B_T \end{array}\right]= \left[\begin{array}{cc} A_T^\mathsf{H} & C_T^\mathsf{H} \end{array}\right]$ and hence that $\M_T$
is its own dual system, then
\[
\left \| ( R^{-\mathsf{H}}\left[\begin{array}{ccc} 0 & I_n & 0 \\ I_n & 0 & 0 \\ 0 & 0 & 2I_m \end{array}\right] R^{-1} )^2 \right \|_2 = \left \| ( R^{-\mathsf{H}}\left[\begin{array}{ccc} I_n & 0 & 0 \\ 0 & I_n & 0 \\ 0 & 0 & 2I_m \end{array}\right] R^{-1})^2 \right \|_2 = \|(D_s^{-1}\widehat W_n^{-1} D_s^{-1})^2\|_2.
\]
In this case it follows that
\[
  \rho_{\M}(X) \le \lambda_{\min}(D_s\widehat W_n D_s) \le \rho_{\M_T}(X),
\]
which implies that such  a normalized realization has a better passivity radius than the corresponding non-normalized realization.

\end{document}